\journal{Nuclear Physics B}
\begin{document}

\begin{frontmatter}



\title{Reinforcement Learning Framework For Stochastic Optimal Control Problem Under Model Uncertainty}


\author[author1]{JiaXuan Hou}
\address[author1]{School of Mathematical Science, Ocean University of China, China}

\author[author3]{LiFeng Wei \corref{cor1}}
\address[author3]{School of Mathematical Science, Ocean University of China, China}
\cortext[cor1]{Corresponding author}

\author[author2]{GuangChen Wang}
\address[author2]{School of Control Science  and Engineering,ShanDong University, China}

\begin{abstract}
We develop a continuous-time entropy-regularized reinforcement learning framework under model uncertainty. By applying Sion’s minimax theorem, we transform the intractable robust control problem into an equivalent standard entropy-regularised stochastic control problem, facilitating reinforcement learning algorithms. 
We establish sufficient conditions for the theorem’s validity and demonstrate our approach on linear-quadratic problems with uncertain model parameters following Bernoulli and uniform distributions.
\end{abstract}

\begin{keyword}
Reinforcement learning \sep 
stochastic control \sep 
model uncertainty \sep
linear-quadratic. 



\end{keyword}

\end{frontmatter}

\section{Introduction}
We consider a infinite horizon stochastic optimal control problem under model uncertainty. Within this framework the cost function has uncertain parameters $\theta$, which is used to represent different market conditions. The work of Hu and Wang \cite{hu2020maximum} addresses a stochastic recursive optimal control problem under model uncertainty. Specifically, \cite{hu2020maximum} derives the corresponding stochastic maximum principle and additionally investigates a linear-quadratic robust control problem. 

Since numerous applications of stochastic optimal control problems have been found in many areas such as economy, engineering, and biology, it has become important to know how to solve stochastic optimal control problems. Early foundational work, exemplified by Peng et al.\cite{peng1990general}, \cite{peng1992generalized}, \cite{peng1992stochastic} established theoretical results for stochastic optimal control, including maximum principles and dynamic programming principles. Lately, there has been a growing interest in using reinforcement learning(RL) to solve stochastic optimal control problems (see \cite{jia2023q} for more details).

Reinforcement learning and adaptive dynamic programming(ADP) provide a framework for data-driven, learning-based approaches to problems of stochastic optimal control problems (see articles \cite{sutton1998reinforcement} \cite{wang2020reinforcement} for more details)
. In 2020, Wang and Zhou \cite{wang2020reinforcement} proposed a continuous-time relaxed stochastic control formulation, which involves the differential entropy of distributions of actions to trade off the exploration and exploitation in reinforcement learning. Moreover, Wang et al. \cite{wang2020continuous} developed an RL method for an mean-variance portfolio selection. And Sun et al. \cite{jia2022policy} solves an two-person zero-sum differential game problem.\par
In this paper we establish a RL framework to solve stochastic optimal control problems under model uncertainty. Model uncertainty refers to the agent’s incomplete knowledge about the real environment dynamics. For example, in portfolio selections, the share market is characterized by different coefficients in the bull market and bear market. However, it is hard to predict the actual coefficients before investing. A natural idea is to measure the cost in a robust way. We define the cost function based on the most unfavorable condition for the investor, which ensures a minimal loss under the worst-case scenario. But currently, there remain challenges for classical entropy-regularized RL algorithms to solve stochastic optimal control problems under model uncertainty. Parametric uncertainty introduces uncertainty in the policy distribution, thus posing difficulties in solving for optimal policies.\par
To address this problem, we propose two cases of continuous-time entropy-regularized reinforcement learning (RL) framework under model uncertainty. Specifically, we consider an agent interacting with a family of potential models parameterized by an unknown variable $\theta \in \Theta$. The true model is unknown to the agent, who instead aims to achieve robust performance across all plausible scenarios. Notably, the robust cost is formulated as the supremum over a family of probability measures, making classical approaches for deriving the Hamilton–Jacobi–Bellman (HJB) equation difficult to apply. To overcome this challenge, we handle the derivative of the value function using Sion’s minimax theorem. Building on these results, we apply the entropy-regularized RL framework to a linear-quadratic (LQ) robust control problem. We show that the entropy-regularized RL framework retains solvability under model uncertainty and converges to the classical LQ formulation as the exploration weight tends to zero.\par
This paper is organized as follows. In section 2 we formally introduces the robust entropy-regularized RL problem and its formulation via controlled SDEs.In section 3 we present the associated Hamilton-Jacobi-Bellman(HJB) equation and the optimal control distribution under model uncertainty. Section 4 focuses on the linear-quadratic case, characterizing optimal Gaussian policies when the model coefficient follows a two-point and a uniform distribution respectively.Then we verifies their equivalence to classical solvable cases. Section 5 discusses the convergence to classical control problems in the zero-entropy limit and examines the cost of exploration. Section 6 concludes with a summary and future directions.

\section{Formulation Of Problem}
We now introduce the entropy-regularized relaxed stochastic control problem under model uncertainty and provide its motivation in the context of RL.\par
\textbf{Notation.}Throughout this paper, let $(\mathscr{F}_t)_{0 \leq t\leq T}$be the natural filtration generated by W augmented by the $\mathbb{P}$-null sets of$\mathscr{F}$.Denote by $\mathbb{R}^n$ the n-dimensional real Euclidean space,$\mathbb{R}^{n\times d}$ the set of $n\times d$ real matrices, and $\mathbb{S}_n$the set of symmetric $n \times n$ real matrices.
\par  
$\mathcal{M}^p(0,\infty;\mathbb{R})$ is the space of $\mathbb{R}$-valued $\mathscr{F}$-progressively measurable processes $u(t)$ on $[0, \infty]$ satisfying $$\mathbb{E}\left[\int_0^\infty |u(t)|^p dt \right] <\infty;$$
$\mathcal{H}^p(0,\infty;\mathbb{R})$ is the space of $\mathbb{R}$-valued $\mathscr{F}$-progressively measurable processes $z(t)$ on $[0,\infty)$ satisfying 
$$\mathbb{E}\left[\left(\int_0^\infty |z(t)|^2 dt \right)^{\frac p2}\right] <\infty;$$
$\mathcal{H}^{1,p}(0,\infty;\mathbb{R})$ is the space of $\mathbb{R}$-valued $\mathscr{F}$-progressively measurable processes $z(t)$ on $[0,\infty)$ satisfying 
$$\mathbb{E}\left[\left(\int_0^\infty |z(t)| dt \right)^p\right] <\infty.$$\par
\subsection{Classical stochastic optimal control problem under model uncertainty}
Firstly we introduce the definition of admissible control in classical problem. Assume $U$ is a given nonempty convex subset of $\mathbb{R}$. And $u:[0,\infty)\times \Omega \rightarrow U$ is said to be an admissible control if $u \in \mathcal{M}^p(0,\infty;\mathbb{R}^k)$. The set of admissible controls is indicated by $\mathcal{U}[0,\infty).$\par
Assuming an agent in the market can choose a control $u(t)$ from $\mathcal{U}[0,\infty)$ to obtain some SDE on $[0,\infty)$. However, sometime actual drift and diffusion coefficients is unknown due to the model uncertainty. Instead, a set of coefficients that could potentially occur in market is explicit. In this case, the classical stochastic control problem is described by
\begin{equation}
    dx^{cl}_{\theta}(t) = b_{\theta}(t,x^{cl}_{\theta}(t),u_t)dt+\sigma_{\theta}(t,x^{cl}_{\theta}(t),u_t)dW(t), \ t>0 \label{clasde}
\end{equation}
with $x_\theta^{cl}(0)=x, \ x<|M|$.
We have $\theta \in \Theta$ and $\Theta$ is a locally compact complete separable space with distance $\mu$. The corresponding cost is given by the $y_\theta(0)$ term of the following SDE on $[0,\infty):$
\begin{equation}
    \begin{aligned}
        y_\theta^{cl}(0) = \mathbb{E}\left[\int_0^\infty e^{-\rho t}f_\theta(t,x_\theta(t),u)\  dt|x_0 = x\right].
    \end{aligned}
\end{equation}
where $\rho>0$ is the discount rate which is subject to be determined, and $b_\theta:[0,\infty)\times\mathbb{R}\times U\rightarrow\mathbb{R}$, $\sigma_\theta :[0,\infty)\times\mathbb{R}\times U\rightarrow\mathbb{R}$, $f_\theta:[0,\infty)\times\mathbb{R}\times U\rightarrow\mathbb{R}$ are Borel measurable functions. Due to the model uncertainty, the cost function is defined by
$$J^{cl}(u)= \sup_{Q\in \mathcal{Q}}\int_{\Theta} y_\theta^{cl}(0)Q(d\theta),$$
where $Q$ is a set of probability measure on $(\Theta, \mathcal{B}(\Theta))$.  \par
In this paper, we make the following assumptions.\\
(H2.1)$|b_{\theta}(t,x,u)-b_{\theta}(t,x',u')|+|\sigma_{\theta}(t,x,u)-\sigma_{\theta}(t,x',u')| \leq L(|x-x'|+|u-u'|), \forall x,x'\in \mathbb{R}, u,u'\in U,\theta\in\Theta,t\in[0,+\infty],  \forall x,x'\in \mathbb{R}, u,u'\in U,\theta\in\Theta,t\in[0,+\infty]$,\\ $|b_{\theta}(t,0,0)|+|\sigma_{\theta}(t,0,0)|+|f_{\theta}(t,0,0)|\leq L$, for any $t\geq0$ and some positive constant $L$.\\
(H2.2)For each $u\in \mathcal{U},\ x\in \mathbb{R} $, $b_\theta(\cdot,x,u) \in \mathcal{H}^{1,p}(0,\infty; \mathbb{R})$ and $ \sigma_\theta(\cdot,x,u) \in \mathcal{H}^p(0,\infty;\mathbb{R})$ for some $p\geq2$.\\
(H2.3)For each $N>0$, there exists a modulus of continuity $\bar{\omega_N}:[0,\infty)\rightarrow[0,\infty) $ such that $|l_\theta(t,x,u)-l_{\theta'}(t,x,u)|\leq \bar{\omega}_N(\mu(\theta,{\theta'}))$ for any $t\in [0,T], |x|,|u|\leq N, \theta, \theta'\in \Theta$, where $l_\theta$ is $b_\theta, \sigma_\theta,f_\theta$ and their derivatives in $(x,u)$.\\
(H2.4)$\mathcal{Q}$ is a weakly compact and convex set of probability measure on $(\Theta, \mathcal{B}(\Theta)).$

The aim of the control is to achieve the minimum robust cost, thus we define the following value function: 
\begin{equation}
\begin{aligned}
    V^{cl}(x) = \inf_{u\in \mathcal{U}} \sup_{\mathbf{Q}\in\mathcal{Q}}\int_{\Theta}y^{cl}_\theta(0)\ \mathbf{Q}(d\theta)
\end{aligned}    
\end{equation}
In the classical setting the model is fully explicit. Applying the dynamic programming principle, the optimal control can be derived as a deterministic mapping from the current state to the action space $U$.

\subsection{Exploratory stochastic optimal control problem under model uncertainty}
In the context of reinforcement learning, the absence of explicit models necessitates the adoption of dynamic learning approaches. Based on the method in \cite{wang2020reinforcement},for each $\theta \in\Theta$ we introduce the exploratory version of the state dynamics. Define
$$ \widetilde{b}_{\theta}(t,X_{\theta}^{\pi}(t),{\pi}_t) := \int_{U}b_{\theta}(t,X_{\theta}^{\pi}(t),u)\pi_{t}(u) du\quad,$$
$$\widetilde{\sigma}_{\theta}(t,X_{\theta}^{\pi}(t),{\pi}_t) := \sqrt{\int_{U}\sigma_{\theta}^2(t,X_{\theta}^{\pi}(t),u)\pi_{t}(u) du}\quad. 
$$
Here drift $\widetilde{b}_{\theta}(t,X_{\theta}^{\pi}(t),{\pi}_t)$ and volatility $\widetilde{\sigma}_{\theta}(t,X_{\theta}^{\pi}(t),{\pi}_t)$ are incorporated into the following exploratory formulation of the state equation:
\begin{equation}
\begin{cases}
    dX^{\pi}_{\theta}(t) = \widetilde{b}_{\theta}(t,X^{\pi}_{\theta}(t),\pi_t)dt+\widetilde{\sigma}_{\theta}(t,X^{\pi}_{\theta}(t),\pi_t)dW(t), \quad t\in[0,\infty) \\
    X^{\pi}_{\theta}(0) = x,\quad x\in [-M,M]\ (M>0).\label{exploratory sde}
\end{cases}
\end{equation}
In this formulation, the control process $u_t$ is randomized, leading to a distributional control process over the control space $U$ whose density function is denoted by $\pi=\{\pi_t, t>0\}.$ Note that the control $u$ is no longer a mapping from $[0,\infty)\times \Omega$ to $U$, but a point on $U$ sampled from the distribution $\pi$. \par
Thus we can define the following space:\par
For any distribution $\pi$, $\mathcal{G}^p(U;\mathbb{R})$ is the space of $\mathbb{R}$-valued $\mathcal{B}(U)$-measurable functions $f(u)$ satisfying $$ \int_U|f(u)|^p \pi(u) du<\infty;$$ \par
And we add the followig assumption:\\
(H2.5)For each $t\in \mathbb{R},\ x\in \mathbb{R} $, $b_\theta(t,x,\cdot), \sigma_\theta(t,x,\cdot), $  $f_\theta(t,x,\cdot)  \in \mathcal{G}^p(U; \mathbb{R}).$\par

To capture the agent's exploratory behavior, the control is modeled as a randomized distribution $\pi_t(u)$. Due to the absence of complete model knowledge, a regularization term is introduced to promote sufficient exploration. Thus, similar to \cite{wang2020reinforcement}, we add Shannon's differential entropy to measure the level of exploration:
$$\mathcal{H}(\pi):= -\int_U \pi(u)\ln\pi(u)du,\quad \pi \in \mathcal{P}(U).$$
For convenience, we define
$$\widetilde{f}_\theta(t,X^\pi_\theta(t),{\pi}_t(u)) =\int_Uf_\theta(t,X^\pi_\theta(t),u){\pi}_t(u) du,$$
Then with \eqref{exploratory sde}, for each $\theta \in \Theta$ we can introduce the exploratory form of corresponding cost :
\begin{equation}
    \begin{aligned}
        y^\pi_\theta(0) = \mathbb{E}\left[\int_0^\infty e^{-\rho t}\left( \widetilde{f}_\theta(t,X^\pi_\theta(t),\pi_t(u))- \alpha\mathcal{H}(\pi_t) \right)dt\right],\label{ypi0}
    \end{aligned}
\end{equation}    
where exogenous parameter $\alpha>0$ denote the trade-off between exploitation and exploratory. Considering the model uncertainty, the exploratory formulation of cost function is defined by:
\begin{equation}
    J(\pi)= \sup_{Q\in \mathcal{Q}}\int_{\Theta} y^\pi_\theta(0)Q(d\theta).\label{Jpi}
\end{equation}
We will minimize \eqref{Jpi} by dynamic programming. For that we introduce the value function of this entropy-regularized relaxed stochastic control problem under model uncertainty
\begin{equation}
\begin{aligned}
    &V(x) :=\inf_{\pi\in \mathcal{A}(x,\theta)}\sup_{\mathbf{Q}\in\mathcal{Q}} \int_{\Theta}y^\pi_\theta(0)\ \mathbf{Q}(d\theta) \label{V(x)1}.
\end{aligned}    
\end{equation}
The admissible control set $\mathcal{A}(x,\theta)$ may depends on $x$ and $\theta$ in general. We define it as follows. Let $\mathcal{B}(U)$ be the Borel algebra on $U$. A distributional control process ${{\pi}_t,t\geq0}$ belongs to $\mathcal{A}(x,\theta)$,if: \par
(i) for each $t\geq 0,{\pi}_t \in \mathcal{P}(U)$ a.s.;\par
(ii) for each $A\in \mathcal{B}(U)$,$\{\int_A {\pi}_t(u) du, t\geq 0\}$ is $\mathcal{F}_t$-progressively measurable;\par
(iii) for each $t\geq 0,\ \int_U|u|^k \pi_t(u)du < \infty,  k =1,2,...$;\par
(iv) the state dynamic\eqref{exploratory sde} has a unique strong solution $X^\pi = \{X^\pi_t, t\geq 0\}$ if ${\pi}_t$ is applied;\par
(v)The right hand side of \eqref{ypi0} is bounded.\\
And we add this assumption:\par
(H2.6) With $\{X_\theta(t), t\geq 0\}$ solving \eqref{exploratory sde}, $X_t\ \widetilde{b} _\theta(t,X_\theta(t), \pi)+\frac12 \widetilde{\sigma}_\theta^2(t, X_\theta(t),\pi)\\ \leq \frac{-\beta}{2}X_\theta^2(t)$ for $\forall\pi \in \mathcal{A}(x,\theta)$. 
\newtheorem{lemma}{Lemma}
\begin{lemma}
    Asuume that (H2.1)-(H2.6) hold. Then, $\theta\rightarrow X_\theta^\pi(t)$ is continuous with respect to $\theta$.\label{lemmaxtheta}
\end{lemma}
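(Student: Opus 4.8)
The plan is to establish continuity through a quantitative $L^2$-stability estimate for the exploratory state equation \eqref{exploratory sde}: I would show that $\mathbb{E}\bigl[\sup_{s\le t}|X_\theta^\pi(s)-X_{\theta'}^\pi(s)|^2\bigr]$ is controlled by a quantity that vanishes as $\mu(\theta,\theta')\to 0$, which gives continuity (in $L^2$, hence along a subsequence pathwise, or in probability) of $\theta\mapsto X_\theta^\pi(t)$. Fix $\theta,\theta'\in\Theta$ and set $\Delta(s):=X_\theta^\pi(s)-X_{\theta'}^\pi(s)$; since both trajectories start at the same point $x$, we have $\Delta(0)=0$. Applying It\^o's formula to $|\Delta(s)|^2$ and taking expectations removes the martingale part, leaving
\begin{equation}
\mathbb{E}\bigl[|\Delta(t)|^2\bigr]=\mathbb{E}\int_0^t\Bigl(2\Delta(s)\bigl(\widetilde b_\theta-\widetilde b_{\theta'}\bigr)+\bigl(\widetilde\sigma_\theta-\widetilde\sigma_{\theta'}\bigr)^2\Bigr)ds,\nonumber
\end{equation}
where each coefficient is evaluated along its own trajectory.

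The next step is to split each coefficient difference into a state-feedback part and a pure parameter part by inserting the intermediate term $\widetilde b_\theta(s,X_{\theta'}^\pi(s),\pi_s)$:
\begin{equation}
\widetilde b_\theta(s,X_\theta^\pi,\pi_s)-\widetilde b_{\theta'}(s,X_{\theta'}^\pi,\pi_s)=\bigl[\widetilde b_\theta(s,X_\theta^\pi,\pi_s)-\widetilde b_\theta(s,X_{\theta'}^\pi,\pi_s)\bigr]+\bigl[\widetilde b_\theta(s,X_{\theta'}^\pi,\pi_s)-\widetilde b_{\theta'}(s,X_{\theta'}^\pi,\pi_s)\bigr],\nonumber
\end{equation}
and analogously for $\widetilde\sigma$. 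Because $\pi_s$ integrates to one, the Lipschitz bound (H2.1) passes through the averaging defining $\widetilde b_\theta,\widetilde\sigma_\theta$, so the first bracket is dominated by $L|\Delta(s)|$. The second bracket is a genuine $\theta$-difference, $\int_U\bigl(b_\theta(s,X_{\theta'}^\pi,u)-b_{\theta'}(s,X_{\theta'}^\pi,u)\bigr)\pi_s(u)\,du$, which is exactly where the modulus of continuity (H2.3) enters; for the diffusion I would estimate $\bigl|\widetilde\sigma_\theta-\widetilde\sigma_{\theta'}\bigr|$ via $|\sqrt a-\sqrt b|\le\sqrt{|a-b|}$, reducing it to the $\theta$-difference of $\int_U\sigma_\theta^2\,\pi_s\,du$, or else work directly with $\widetilde\sigma^2$. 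Feeding these bounds back and applying Gronwall's inequality then yields $\mathbb{E}[|\Delta(t)|^2]\le C(t)\,\varepsilon(\theta,\theta')^2$, where $\varepsilon(\theta,\theta')$ is the effective parameter gap, and continuity follows once $\varepsilon(\theta,\theta')\to0$.

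The main obstacle is that (H2.3) only furnishes the bound $\bar\omega_N(\mu(\theta,\theta'))$ on the compact set $\{|x|,|u|\le N\}$, whereas both $U$ and the state range may be unbounded, so one cannot directly identify $\varepsilon(\theta,\theta')$ with $\bar\omega_N(\mu(\theta,\theta'))$. To handle this I would localize: split the inner integral over $U$ into $\{|u|\le N\}$ and $\{|u|>N\}$, bounding the first piece by $\bar\omega_N(\mu(\theta,\theta'))$ and the tail by the linear-growth bound from (H2.1) together with the moment condition (iii) in the definition of $\mathcal{A}(x,\theta)$, which makes the tail uniformly small as $N\to\infty$; simultaneously I would control the event $\{|X_{\theta'}^\pi(s)|>N\}$ using the a priori second-moment estimate for the state that the dissipativity assumption (H2.6) guarantees. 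One then fixes $N$ large enough to push all tail contributions below a prescribed $\delta$, and only afterwards lets $\mu(\theta,\theta')\to0$ so that $\bar\omega_N(\mu(\theta,\theta'))\to0$ for that fixed $N$. This two-stage limit, $N$ first and then $\theta'\to\theta$, is the delicate point; the It\^o and Gronwall steps are otherwise routine.
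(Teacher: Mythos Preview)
Your argument is correct and in fact somewhat more direct than the paper's. You run It\^o $+$ Gronwall on $[0,t]$ for a fixed $t$ and absorb the Lipschitz-in-state pieces into the Gronwall factor, isolating a pure $\theta$-difference remainder that you then localize in $(x,u)$ via (H2.3), the admissibility moments, and the state bounds coming from (H2.6). The paper takes a different route: it first splits \emph{time} rather than $(x,u)$-space, introducing deterministic times $\tau_{1\varepsilon},\tau_{2\varepsilon}$ at which $\mathbb{E}|X_\theta|^2,\mathbb{E}|X_{\theta'}|^2$ have decayed below $\varepsilon/2$ (this decay is exactly what (H2.6) and the Lyapunov Lemma~\ref{appendixB} provide). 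On the finite window $[0,\tau_\varepsilon]$ the paper writes the SDE for the difference and invokes Lemma~A.1 of \cite{hu2020maximum} instead of the bare It\^o--Gronwall step; for $t>\tau_\varepsilon$ both states are already small, so the difference is trivially $\le\varepsilon$. The upshot is that the paper's bound is uniform in $t\in[0,\infty)$, which is stronger than your fixed-$t$ estimate (where the Gronwall constant $C(t)$ may blow up) and is what is implicitly needed in the proof of Lemma~2, where one integrates $e^{-\rho t}|X_\theta^\pi(t)-X_{\theta'}^\pi(t)|$ over the whole half-line. Your method proves the lemma as stated; the paper's extra structure buys the uniformity used downstream.
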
 
\begin{proof}
    For any given $\varepsilon > 0$, we define these stopping time by
$$\tau_{1\varepsilon}:= \inf\{t\geq 0: \mathbb{E}|X_{\theta}(t)|^2<\frac{\varepsilon}{2}\},$$
$$\tau_{2\varepsilon}:= \inf\{t\geq 0: \mathbb{E}|X_{\theta'}(t)|^2<\frac{\varepsilon}{2}\},$$
and $\tau_{\varepsilon} = \tau_{1\varepsilon} \lor \tau_{2\varepsilon} $.\par Denote
$A(t)= \int_0^1\partial_xb_{\theta'}(t,X_{\theta'}^\pi(t)+\lambda(X_{\theta}^\pi(t)-X_{\theta'}^\pi(t)))d\lambda;$\\ 
$C(t)= b_\theta(t,X_{\theta'}^\pi(t)))-b_{\theta'}(t,X_{\theta'}^\pi(t)));$ 
$D(t)= \sigma_\theta(t,X_{\theta'}^\pi(t)))-\sigma_{\theta'}(t,X_{\theta'}^\pi(t)));$ 
$E(t)= \sigma_\theta(t,X_{\theta}^\pi(t)))+\sigma_{\theta'}(t,X_{\theta'}^\pi(t)));$ $B(t)= \int_0^1\partial_x\sigma_{\theta'}(t,X_{\theta'}^\pi(t)+\lambda(X_{\theta}^\pi(t)-X_{\theta'}^\pi(t)))d\lambda;$ \\
$$F(t)=\sqrt{\int_U\sigma^2_\theta(s,X^\pi_\theta(t))\pi(u)du}+\sqrt{\int_U\sigma^2_{\theta'}(s,X^\pi_{\theta'}(t))\pi(u)du}.$$
Thus, the process $\alpha$ satisfies the following SDE:
\begin{equation}
\begin{aligned}
\alpha(t)&= \int_0^{\tau_{\varepsilon}} \int_U (A(t)\alpha(t)+C(t) )\pi(u)du\ dt \\ &+
\int_0^{\tau_{\varepsilon}}\frac{\int_U \left(B(t)\alpha(t)+D(t)\right)E(t)\pi(u)du}{F(t)}d W_t.
\end{aligned}
\end{equation}
For the following analysis, we recall the following result from \textbf{Lemma A.1} in  \cite{hu2020maximum}.
It states that if (H2.1) and (H2.2) holds, then the SDE \eqref{clasde} satisfies that
$$\mathbb{E}\left[\sup_{0\leq t\leq T}|x_{\theta}(t)|^p\right] \leq C(L,T,p)\mathbb{E}\left[|x_0|^p + \left(\int_0^T|b_{\theta}(t,0)|\right)^p + \left(\int_0^T|\sigma_{\theta}(t,0)|^2\right)^{\frac p2}\right].$$

On $[0,\tau_{\varepsilon}]$ applying the above inequality yields that 
\begin{equation}
\begin{aligned}
&\mathbb{E}[\sup_{0\leq t\leq \tau_{\varepsilon}}|X_\theta^\pi(t)-X_{\theta'}^\pi(t)|^2]\\ & \leq C(L,\tau_{\varepsilon})\mathbb{E}\left[ \int_0^{\tau_{\varepsilon}}\left(\int_UC(t)\pi(u)du\right)^2+ \left(\frac{\int_U E(t)D(t)\pi(u)du}{F(t)}\right)^2 dt\right]\\ &
\leq C(L,\tau_{\varepsilon})\mathbb{E}\left[ \int_0^{\tau_{\varepsilon}}\int_U C(t)^2 \pi(u)du+ \left(\int_U\frac{ E(t)^2\pi(u)}{F(t)^2} du \right)^{\frac12} \left(\int_U D(t)^2 \pi(u)du \right)^{\frac12}  dt\right]\\ & \leq C(L,\tau_{\varepsilon})\mathbb{E}\left[ \int_0^{\tau_{\varepsilon}}\int_U C(t)^2 \pi(u)du+ \sqrt{2} \left(\int_U D(t)^2 \pi(u)du \right)^{\frac12}  dt\right]\\ &\leq  C(L,\tau_{\varepsilon},x_0,\pi)(\bar{\omega}_N^2(\mu(\theta,\theta'))+N^{-1}+N^{-\frac12})
\end{aligned}  
\end{equation}
for any $N>0$. For the detailed derivation of the final inequality, we refer the reader to \cite{hu2020maximum}. Then we could get that
$$\lim_{\epsilon\rightarrow0}\sup_{\mu(\theta,\theta')\leq\epsilon}\mathbb{E}\left[\sup_{0\leq t\leq \tau_{\varepsilon}}|X_\theta(t)-X_{\theta'}(t)|^2\right]=0.$$
Now we set
$$V(t, X_\theta(t))= \frac12 X_\theta(t)^2.$$
Given the assumption (H2.6), we can directly derive that $V(t, X_\theta(t))$ is a Lyapunov function for \eqref{exploratory sde}.
Then for $t>\tau_{\varepsilon}$, from \textbf{Lemma \ref{appendixB}} in Appendix we immediate have
$$\mathbb{E}\left[|X_{\theta}(t)-X_{\theta'}(t)|^2\right]\leq\mathbb{E}\left[|X_{\theta}(t)|^2+|X_{\theta'}(t)|^2\right] \leq \varepsilon. $$
This proves that $X^{\pi}_{\theta}$ is continuous with respect to $\theta$.
\end{proof}
\begin{lemma}
Asuume that (H2.1)-(H2.6) hold. Then, $\theta\rightarrow y_\theta^\pi(0)$ is continuous with respect to $\theta$.    
\end{lemma}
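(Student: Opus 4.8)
The plan is to reduce the claim to a continuity estimate for the running-cost integral and then control that integral by combining Lemma \ref{lemmaxtheta} with the regularity hypotheses (H2.1), (H2.3), and (H2.5). Since the entropy term $\alpha\mathcal{H}(\pi_t)$ in \eqref{ypi0} does not depend on $\theta$, it cancels in the difference $y_\theta^\pi(0)-y_{\theta'}^\pi(0)$, so it suffices to prove that
$$\theta \mapsto \mathbb{E}\left[\int_0^\infty e^{-\rho t}\,\widetilde{f}_\theta(t,X_\theta^\pi(t),\pi_t)\,dt\right]$$
is continuous. I would fix a reference parameter $\theta'$ and let $\mu(\theta,\theta')\to 0$, then estimate the difference of these two quantities.

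First I would decompose the integrand difference $\widetilde{f}_\theta(t,X_\theta^\pi(t),\pi_t)-\widetilde{f}_{\theta'}(t,X_{\theta'}^\pi(t),\pi_t)$ as
$$\int_U \big[f_\theta(t,X_\theta^\pi(t),u)-f_{\theta'}(t,X_\theta^\pi(t),u)\big]\pi_t(u)\,du + \int_U \big[f_{\theta'}(t,X_\theta^\pi(t),u)-f_{\theta'}(t,X_{\theta'}^\pi(t),u)\big]\pi_t(u)\,du,$$
which isolates the change in the parameter of $f$ from the change in the state argument. On the event $\{|X_\theta^\pi(t)|,|X_{\theta'}^\pi(t)|,|u|\le N\}$ the first bracket is bounded by the modulus $\bar{\omega}_N(\mu(\theta,\theta'))$ of (H2.3). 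For the second bracket I would use the mean-value representation $f_{\theta'}(t,x,u)-f_{\theta'}(t,x',u)=\int_0^1 \partial_x f_{\theta'}(t,x'+\lambda(x-x'),u)(x-x')\,d\lambda$, exactly as in the proof of Lemma \ref{lemmaxtheta}, so that it is bounded by the local bound on $\partial_x f_{\theta'}$ provided by the regularity of (H2.3) times $|X_\theta^\pi(t)-X_{\theta'}^\pi(t)|$.

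Next I would take expectations and integrate in $t$. The discount factor $e^{-\rho t}$ together with the uniform second-moment bound on $X_\theta^\pi$ following from the Lyapunov condition (H2.6) and the moment estimate used in the proof of Lemma \ref{lemmaxtheta} guarantees integrability on $[0,\infty)$ and supplies the uniform integrability needed both to truncate the horizon and to control the exceptional event $\{|X_\theta^\pi(t)|>N\}$ via Chebyshev's inequality and the linear-growth bound from (H2.1) and (H2.5). On the good event, the first term vanishes as $\bar{\omega}_N(\mu(\theta,\theta'))\to 0$, and the second term vanishes by Lemma \ref{lemmaxtheta}, which yields $\mathbb{E}[|X_\theta^\pi(t)-X_{\theta'}^\pi(t)|^2]\to 0$.

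The main obstacle will be coordinating the three limiting parameters — the horizon truncation, the localization level $N$, and the distance $\mu(\theta,\theta')$ — over the infinite horizon, since both $\bar{\omega}_N$ and the local Lipschitz constant deteriorate as $N$ grows. One must fix $N$ large (using the moment bounds to make the tail small uniformly in $\theta$) before sending $\mu(\theta,\theta')\to 0$. Arranging an $\varepsilon/3$ argument — a small tail in $t$, a small exceptional-event contribution, and a small good-event contribution — and justifying the passage to the limit by dominated convergence with the $e^{-\rho t}$ weight is the delicate step; the remainder is routine given Lemma \ref{lemmaxtheta}.
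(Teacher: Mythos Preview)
Your proposal is correct and follows essentially the same route as the paper: both telescope $f_\theta(t,X_\theta^\pi,u)-f_{\theta'}(t,X_{\theta'}^\pi,u)$ into a state-change part (controlled via a Lipschitz/mean-value bound together with Lemma~\ref{lemmaxtheta}) and a parameter-change part (controlled via the modulus $\bar{\omega}_N$ of (H2.3) on the event $\{|X|,|u|\le N\}$ with a tail estimate of order $N^{-1}$), then integrate against $e^{-\rho t}$ using the moment stability from (H2.6). The only cosmetic differences are the order of the telescope and that the paper invokes the Lipschitz constant $L$ directly while you use the mean-value form; your more explicit handling of the three limiting parameters is, if anything, a cleaner version of the same argument.
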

\begin{proof}
    \begin{equation}
    \begin{aligned}
        |y_\theta(0)-y_{\theta'}(0)|&\leq \mathbb{E}\left[\int_0^\infty  e^{-\rho t}    |f_\theta(t,X^\pi_\theta(t),u)-f_{\theta'}(t,X^\pi_{\theta'}(t),u)|\pi_t(u) du \ dt\right]
    \end{aligned}
\end{equation}
Since $f_\theta(t,X^\pi_\theta(t),u)-f_{\theta'}(t,X^\pi_{\theta'}(t),u)= f_\theta(t,X^\pi_\theta(t),u)- f_\theta(t,X^\pi_{\theta'}(t),u)+f_\theta(t,X^\pi_{\theta'}(t),u)-f_{\theta'}(t,X^\pi_{\theta'}(t),u)$, with assumption (H2.1), it holds that
\begin{equation}
    \begin{aligned}
        |y_\theta(0)-y_{\theta'}(0)|&\leq \mathbb{E}[\int_0^\infty \int_U e^{-\rho t}(L|X_\theta^\pi(t)-X_{\theta'}^\pi(t)|\\ &+|f_\theta(t,X^\pi_{\theta'}(t),u)-f_{\theta'}(t,X^\pi_{\theta'}(t),u)|)\pi_t(u) du\  dt].
    \end{aligned}
\end{equation}
For any $N>0$, we could get that
\begin{equation}
\begin{aligned}
    &\mathbb{E}[\int_0^\infty \int_U e^{-\rho t}|f_\theta(t,X^\pi_{\theta'}(t),u)-f_{\theta'}(t,X^\pi_{\theta'}(t),u)|\ \pi_t(u) du\  dt] \\ &\leq \mathbb{E}\left[ \int_0^\infty \int_U e^{-\rho t}(\bar{\omega}_N(\mu(\theta, \theta'))+2L(1+|X_\theta(t)|+u)(I_{\{|X_\theta(t)|\geq N\}}+I_{\{u\geq N\}}))\pi(u) du\ dt\right]\\ &\leq
    C(L,x_0, \pi, \rho)(\bar{\omega}_N(\mu(\theta, \theta'))+N^{-1})\ \ \forall N>0 .
\end{aligned}
\end{equation}
which implies that
$$\lim_{\epsilon\rightarrow0}\sup_{\mu(\theta,\theta')\leq\epsilon}\mathbb{E}[\int_0^\infty \int_U e^{-\rho t}|f_\theta(t,X^\pi_{\theta'}(t),u)-f_{\theta'}(t,X^\pi_{\theta'}(t),u)|\ \pi_t(u) du\  dt]=0.$$
The moment exponential stability of $X_\theta(t)$ on $[0,\infty)$ yields that $\int_0^\infty \int_U e^{-\rho t}(L|X_\theta^\pi(t)-X_{\theta'}^\pi(t)|\pi(u))du\ dt <\infty$. Thus appealing to \textbf{Lemma\ref{lemmaxtheta}} the proof is complete.
\end{proof}

\section{HJB Equation and Optimal Distributions}
In this section, we derived the HJB equation of the uncertain model stochastic optimal control problem. Then with the help of Sion's minimax theorem, we establish a general RL procedure for solving optimization problem under model uncertainty.\par
Now we are going to discuss the HJB equation. 
Firstly we introduce this subset of $\mathcal{Q}$: for each $\pi\in \mathcal{P}(U)[0,\infty)$,
$$\mathcal{Q}^\pi = \{Q\in \mathcal{Q}|J(\pi)= \sup_{Q\in \mathcal{Q}}\int_{\Theta}y_\theta(0)Q(d\theta)\}.$$
To ensure the applicability of the variational method, we need the following lemma.\par
\begin{lemma}
    Suppose that (H2.1)-(H2.6) hold. Then the set $ \mathcal{Q}^\pi$ is nonempty for each $\pi \in \mathcal{P}(U).$
\end{lemma}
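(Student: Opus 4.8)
The plan is to recognize $\mathcal{Q}^\pi$ as the set of maximizers of the linear functional $\Phi(Q) := \int_{\Theta} y_\theta^\pi(0)\, Q(d\theta)$ over the constraint set $\mathcal{Q}$, and to show this supremum is attained by a compactness argument. The two ingredients are the weak compactness of $\mathcal{Q}$ (assumption (H2.4)) and suitable regularity of the map $g(\theta) := y_\theta^\pi(0)$, namely that it is bounded and continuous, which makes $\Phi$ continuous for the weak topology.

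First I would record that, by the preceding lemma, $\theta \mapsto y_\theta^\pi(0)$ is continuous on $\Theta$. Next I would establish a uniform bound $\sup_{\theta \in \Theta} |y_\theta^\pi(0)| < \infty$. This should follow by combining the admissibility condition (v) with the Lipschitz and linear-growth hypotheses (H2.1) and the dissipativity assumption (H2.6): the latter yields moment bounds on $X_\theta^\pi(t)$ that are uniform in $\theta$, while the discount factor $e^{-\rho t}$ together with the linear growth of $f_\theta$ controls the time integral defining $y_\theta^\pi(0)$ independently of $\theta$, the entropy term contributing a finite amount under admissibility. Hence $g$ is a bounded continuous function on $\Theta$.

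With $g$ bounded and continuous, the functional $\Phi$ is continuous for weak convergence: if $Q_n \rightharpoonup Q$ in the weak topology on probability measures, then $\int g\, dQ_n \to \int g\, dQ$ by the very definition of weak convergence. I would then run a direct maximizing-sequence argument. Choose $Q_n \in \mathcal{Q}$ with $\Phi(Q_n) \to J(\pi) = \sup_{Q \in \mathcal{Q}} \Phi(Q)$. By (H2.4) together with Prokhorov's theorem (applicable since $\Theta$ is complete separable), $\mathcal{Q}$ is weakly sequentially compact, so a subsequence $Q_{n_k}$ converges weakly to some $Q^\star \in \mathcal{Q}$. Weak continuity of $\Phi$ then gives $\Phi(Q^\star) = \lim_k \Phi(Q_{n_k}) = J(\pi)$, so $Q^\star$ attains the supremum and therefore $Q^\star \in \mathcal{Q}^\pi$, proving nonemptiness.

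The step I expect to be the main obstacle is the uniform boundedness of $g(\theta) = y_\theta^\pi(0)$ over the possibly non-compact space $\Theta$: continuity alone does not furnish boundedness on a merely locally compact $\Theta$, so one must genuinely exploit the stability bound (H2.6) and the discounting to obtain a $\theta$-independent estimate. If only an upper bound and upper semicontinuity of $g$ were available, I would instead invoke the Portmanteau theorem to get $\limsup_k \int g\, dQ_{n_k} \le \int g\, dQ^\star$, which still suffices to conclude that the supremum is attained; so the argument remains robust even if full weak continuity of $\Phi$ is hard to secure.
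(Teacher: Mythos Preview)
Your proposal is correct and follows essentially the same route as the paper: take a maximizing sequence in $\mathcal{Q}$, use weak compactness (H2.4) to extract a weakly convergent subsequence, and pass to the limit using that $\theta\mapsto y_\theta^\pi(0)$ is bounded and continuous. One minor simplification: since (H2.4) already assumes $\mathcal{Q}$ is weakly compact, you do not need Prokhorov's theorem---weak sequential compactness is immediate on a Polish space---and the paper obtains boundedness of $y_\theta^\pi(0)$ directly from admissibility condition (v) rather than re-deriving it from (H2.6).
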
 
\begin{proof}
    By the definition $J(\pi)$, there exist a sequence $Q^N \in \mathcal{Q}$ so that 
$$J(\pi)-\frac1N \leq \int_\Theta y_\theta(0)Q^N(d\theta)\leq J(\pi).$$
Note that $\mathcal{Q}$ is weakly compact. Then choosing a subsequence if necessary, we could find a $Q^\pi \in \mathcal{Q}$ such that $Q^N$ converges weakly to $Q^\pi$. From assumptions we can deduce that the function $\theta \rightarrow y_\theta(0)$ is continuous and bounded. It follows that
$$J(\pi) \geq \int_\Theta y_\theta(0) Q^\pi(d\theta) = \lim_{N\rightarrow \infty}\int_\Theta y_\theta(0) Q^N(d\theta)\geq J(\pi),$$
which ends the proof.
\end{proof}
For the sake of notation, we denote
\begin{equation}
\widetilde{L}(t,X_{\theta}^\pi(t),{\pi}_t)= \widetilde{f}_{\theta}(t,X_{\theta}^{\pi}(t),\pi^{\theta}_t(u)) -\alpha\mathcal{H}({\pi}_t).\label{L}
\end{equation}
Applying the Bellman's principle of optimality to \eqref{V(x)1}, and invoking lemma 3.1, we conclude that for the optimal control $\pi^*$, there exists a probability measure $Q\in \mathcal{Q}^{\pi^*}$  such that
\begin{equation}
\begin{aligned}
    &V(x) =\inf_{\pi\in \mathcal{A}(x,\theta)}\int_{\Theta}\mathbb{E}[\int_0^{t} e^{-\rho s} \widetilde{L}_\theta(s,X_\theta^\pi(s),{\pi}_s) \ ds + e^{-\rho t}V(X_\theta^\pi (t))\ ]\ Q(d\theta) ,\  s>0.
\end{aligned}    
\end{equation}
Proceeding with standard arguments, we deduce that \eqref{V(x)1} satisfies the HJB equation
\begin{equation}
    \begin{aligned}
        &\rho V(x)=\inf_{\pi \in \mathcal{A}(x,\theta)} \sup_{Q\in \mathcal{Q}}\int_\Theta \widetilde{L}_\theta(0,x,{\pi}_0) + V'(x)\widetilde{b}_\theta(0,x,{\pi}_0)\\ &+\frac12 V''(x)\widetilde{\sigma}_\theta^2(0,x,{\pi}_0
        ) \ Q(d\theta).
    \end{aligned}
\end{equation}

\newtheorem{theorem}{Theorem}
\begin{theorem}
    Assume that the condition (H2.4) holds, and control distribution $\pi(u)$ satisfied all the requirements above. Then we can derive that 
\begin{equation}
    \begin{aligned}
        \rho V(x)&=\sup_{Q\in \mathcal{Q}}\inf_{\pi \in \mathcal{A}(x,\theta)} \int_\Theta \widetilde{L}_\theta(0,x,{\pi}_0) + V'(x)\widetilde{b}_\theta(0,x,{\pi}_0)\\ &+\frac12 V''(x)\widetilde{\sigma}_\theta^2(0,x,{\pi}_0
        )\  Q(d\theta).\label{hjbthe}
    \end{aligned}
\end{equation}
\end{theorem}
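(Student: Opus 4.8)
The plan is to read the right-hand side of the HJB equation as the value of a saddle-point problem and to interchange the infimum and supremum via Sion's minimax theorem. To this end, write the integrand as
\[
\Phi(\theta,\pi):=\widetilde{L}_\theta(0,x,\pi_0)+V'(x)\widetilde{b}_\theta(0,x,\pi_0)+\tfrac12 V''(x)\widetilde{\sigma}_\theta^2(0,x,\pi_0),
\]
and set $g(\pi,Q):=\int_\Theta \Phi(\theta,\pi)\,Q(d\theta)$, so that the preceding display reads $\rho V(x)=\inf_{\pi\in\mathcal{A}(x,\theta)}\sup_{Q\in\mathcal{Q}}g(\pi,Q)$ while \eqref{hjbthe} asserts $\inf_{\pi}\sup_{Q}g=\sup_{Q}\inf_{\pi}g$. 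Sion's theorem yields this equality once one checks: (i) both domains are convex and one is compact; (ii) $Q\mapsto g(\pi,Q)$ is quasi-concave and upper semicontinuous for each fixed $\pi$; and (iii) $\pi\mapsto g(\pi,Q)$ is quasi-convex and lower semicontinuous for each fixed $Q$. I would verify these hypotheses in turn.

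For the $Q$-variable, convexity and weak compactness of $\mathcal{Q}$ are exactly assumption (H2.4), which supplies the compact side. Since $g(\pi,\cdot)$ is \emph{affine} in $Q$ (integration of a fixed function against the measure), it is simultaneously quasi-concave and quasi-convex; the only substantive point is upper semicontinuity for the weak topology on $\mathcal{Q}$. This reduces to showing that $\theta\mapsto\Phi(\theta,\pi)$ is continuous and bounded on $\Theta$ for fixed $\pi$. Continuity follows from Lemma \ref{lemmaxtheta} (continuity of $\theta\mapsto X_\theta^\pi$) together with the $\theta$-continuity of the coefficients in (H2.3), which transfers to the averaged coefficients $\widetilde{b}_\theta,\widetilde{\sigma}_\theta^2,\widetilde{f}_\theta$; boundedness follows from the linear-growth bound (H2.1) and the moment conditions built into $\mathcal{A}(x,\theta)$, exactly as in the continuity lemma for $\theta\mapsto y_\theta^\pi(0)$. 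Weak continuity of $Q\mapsto g(\pi,Q)$ then gives (ii).

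For the $\pi$-variable, the feasible set $\mathcal{A}(x,\theta)$ (equivalently, the instantaneous distributions $\pi_0\in\mathcal{P}(U)$ it admits) is convex. The decisive structural observation is that $\widetilde{b}_\theta$, $\widetilde{f}_\theta$ and — because the volatility enters only through $\widetilde{\sigma}_\theta^2=\int_U\sigma_\theta^2(0,x,u)\pi_0(u)\,du$ — also $\widetilde{\sigma}_\theta^2$ are all \emph{linear} in $\pi_0$, while the regulariser contributes $\alpha\int_U\pi_0\ln\pi_0\,du$, which is convex in $\pi_0$. Hence $\Phi(\theta,\cdot)$ is convex for each $\theta$, and integrating against the nonnegative measure $Q$ preserves convexity, giving quasi-convexity of $\pi\mapsto g(\pi,Q)$. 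Lower semicontinuity follows from the lower semicontinuity of the negative differential entropy under weak convergence, the remaining terms being continuous; this establishes (iii).

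The main obstacle is the topological bookkeeping rather than the algebra: one must fix compatible topologies on $\mathcal{P}(U)$ and on $\mathcal{Q}$ so that the entropy term is lower semicontinuous in $\pi_0$ while the integral $g(\pi,\cdot)$ is weakly upper semicontinuous in $Q$, and one must secure the uniform integrability needed to pass the $\theta$-continuity of $\Phi$ through the weak topology on $\mathcal{Q}$. Once (i)--(iii) hold, Sion's minimax theorem applies directly and delivers $\inf_{\pi}\sup_{Q}g(\pi,Q)=\sup_{Q}\inf_{\pi}g(\pi,Q)$, which is precisely \eqref{hjbthe}.
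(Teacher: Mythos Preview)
Your proposal is correct and follows essentially the same route as the paper: both arguments cast the HJB right-hand side as a saddle function, verify that the $Q$-side is affine and weakly (upper semi)continuous via (H2.4) and the $\theta$-continuity of the coefficients (H2.3), verify that the $\pi$-side is convex and lower semicontinuous by linearity of $\widetilde{b}_\theta,\widetilde{\sigma}_\theta^2,\widetilde{f}_\theta$ in $\pi_0$ together with convexity/l.s.c.\ of the negative differential entropy, and then invoke Sion's minimax theorem. One small remark: you appeal to Lemma~\ref{lemmaxtheta} for the $\theta$-continuity of $\Phi(\theta,\pi)$, but since $X_\theta^\pi(0)=x$ is fixed this is not needed---the paper obtains that continuity directly from (H2.3), which is all that is required at $t=0$.
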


\begin{proof}
    Denote \\$g_\theta^\pi(0)= \int_{\Theta}\widetilde{f}_\theta(0,x,{\pi}_0) + V'(x)\widetilde{b}_\theta(0,x,{\pi}_0)+\frac12 V''(x)\widetilde{\sigma}_\theta^2(0,x,{\pi}_0) \ Q (d\theta)$. With the help of Holder inequality, we can deduce that
\begin{small}
\begin{equation}
\begin{aligned}
    &|g_\theta^\pi(0)-g_\theta^{\pi'}(0)| \leq \\ & \left(\int_\Theta\int_u\left( f_\theta(0,x,u)+ V'(x)b_\theta(0,x,u) du+\frac12 V''(x)\sigma_\theta^2(0,x,u)\right)^2|\pi(u)-\pi'(u)|duQ(d\theta)\right)^{\frac12}\\ & \times\left(\int_\Theta\int_u |\pi(u)-\pi(u)'|du Q(d\theta)\right)^{\frac12}.
\end{aligned}
\end{equation}
\end{small}
Using the assumption (H2.5), a directly result yields that $\pi \rightarrow  g_\theta^\pi(0) $ is continuous. 
As established by \cite{gora1990differential}, it follows that the function $-\mathcal{H}(\pi)$ is lower semicontinuous on the non-negative cone of its domain $L_M^*$ with respect to the *-weak topology $\sigma(L_M^*,E_N)$. Moreover, the concavity of differential entropy is a standard result. Its proof can be found in \cite{Cover2006}. And we can easily derive that $-\mathcal{H}(\pi)$ is convex respect to $\pi$.\\
Furthermore, let us introduce some notations
$$E= \begin{pmatrix}
    1\\V'(x)\\ 
    \frac{1}{2}V''(x)
\end{pmatrix},\quad \widetilde{A}_\theta(t,x,u)=\begin{pmatrix}
    f_\theta\\ b_\theta \\
    \sigma^2_\theta
\end{pmatrix}(t,x,u).$$
From assumption (H2.3), it yields that
\begin{equation}
\begin{aligned}
    &|g_\theta^\pi(0)-g_{\theta'}^{\pi}(0)| = \int_\Theta \int_U \left\langle(\widetilde{A}_\theta(0,x,u)-\widetilde{A}_{\theta'}(0,x,u))\pi_0(u), E \right\rangle du\ Q(d\theta)\\ &
    \leq \int_\Theta\int_U (1+V'(x)+\frac12 V''(x))(\bar{\omega}_N(\mu(\theta,\theta') )+ |u|I_{\{|u|\geq N\}})\pi_0(u) \ du\ Q(d\theta)\\ &\leq C(V'(x), V''(x), x,\pi_0)(\bar{\omega}_N(\mu(\theta,\theta') )+N^{-1}) \quad \forall N>0.
\end{aligned}
\end{equation}
Given that $\Theta$ is local compact, we can derive that $\theta \rightarrow \mathbb{E}[\widetilde{L}_\theta(0,x,{\pi}_0) + V'(x)\widetilde{b}_\theta(0,x,{\pi}_0)+\frac12 V''(x)\widetilde{\sigma}_\theta^2(0,x,{\pi}_0)]$ is bounded and continuous. Since $\mathcal{Q}$ is compact, we can choosing a subsequence $\varepsilon_n\rightarrow0$ such that $\mathbf{Q}^{\varepsilon_n}$ converges weakly to $\mathbf{Q}^*$. It follows that 
\begin{equation}
\begin{aligned}
&\lim_{N\rightarrow\infty}\int_{\Theta}\widetilde{L}_\theta(0,x,{\pi}_0) + V'(x)\widetilde{b}_\theta(0,x,{\pi}_0)+\frac12 V''(x)\widetilde{\sigma}_\theta^2(0,x,{\pi}_0)\ Q^N(d\theta)\\ &=\int_{\Theta}\widetilde{L}_\theta(0,x,{\pi}_0) + V'(x)\widetilde{b}_\theta(0,x,{\pi}_0)+ \frac12 V''(x)\widetilde{\sigma}_\theta^2(0,x,{\pi}_0) Q^*(d\theta).
\end{aligned}    
\end{equation}
Then, according to Sion's minimax theorem, we can establishes the desired result.
\end{proof}
For an arbitrary measure $Q$, we introduce the value function for a fixed coefficient $\theta\in Q$
\begin{equation}
\begin{aligned}
    v_\theta(x) := \inf_{\pi\in \mathcal{A}(x,\theta)} y_\theta^\pi(0),\label{vtheta}
\end{aligned}    
\end{equation}
which is the solution of the corresponding HJB equation
\begin{equation}
    \begin{aligned}
        &\rho v_\theta(x)=\inf_{\pi \in \mathcal{A}(x,\theta)}  \widetilde{L}_\theta(0,x,{\pi}_0) + v_\theta'(x)\widetilde{b}_\theta(0,x,{\pi}_0)+\frac12 v_\theta''(x)\widetilde{\sigma}_\theta^2(0,x,{\pi}_0
        )\label{hjbtheta}.
    \end{aligned}
\end{equation}\\
\eqref{vtheta} should satisfy $$V(x)= \sup_{\mathbf{Q}\in\mathcal{Q}} \int_{\Theta}v_\theta(x)\ \mathbf{Q}(d\theta).$$
Proceeding with standard arguments, we deduce that the right hand side of \eqref{hjbtheta} is minimized at
\begin{equation}
\begin{aligned}
    &\pi_{\theta}^*(u,x) =\frac{exp\{f_{\theta}(x,u)+v_\theta'(x)b_{\theta}(x,u)+\frac{1}{2}v_\theta''(x) \sigma^2(x,u)\}}{\int_U exp\{f_{\theta}(x,u)+v_\theta'(x)b_{\theta}(x,u)+\frac{1}{2}v_\theta''(x)\sigma^2(x,u)\} du},
\end{aligned}   
\end{equation}
For each given initial state $x\in \mathbb{R}$, we can utilize the feedback control to generate an optimal open-loop control
\begin{equation}
\begin{aligned}
    &\pi^{\theta,*}_t =\frac{exp\{f_{\theta}(X_\theta^*(t),u)+v_\theta'(X_\theta^*(t))b_{\theta}(X_\theta^*(t),u)+\frac{1}{2}v_\theta''(X_\theta^*(t))\sigma^2(X_\theta^*(t),u)\}}{\int_U exp\{f_{\theta}(X_\theta^*(t),u)+v_\theta'(X_\theta^*(t))b_{\theta}(X_\theta^*(t),u)+\frac{1}{2}v_\theta''(X_\theta^*(t))\sigma^2(X_\theta^*(t),u)\} du.}
\end{aligned}   
\end{equation}
Note that the Sion's minimax theorem is crucial in above derivation. Accordingly, we can derive the optimal strategy distribution $\pi_\theta^*$ before determining the worst-case coefficient distribution $Q$. In order to use it, we assume that $b_\theta, \sigma_\theta$ and $f_\theta$ belongs to $\mathcal{G}^p(0,\infty; \mathbb{R})$ to ensure the integrand of \eqref{hjbthe} is continuous in $\pi$.

\section{The Linear-Quadratic Cases}

We now focus on the following linear quadratic stochastic control problems under model uncertainty. The problem is characterized by linear state dynamics and quadratic rewards, in which
\begin{equation}
b_\theta(x,u)=A_{\theta}x+B_{\theta}u, \ \ \sigma_\theta(x,u)= C_{\theta}x +D_{\theta}u,\ \ x,u \in \mathbb{R},\label{LQx}
\end{equation} 
where $A_\theta, B_\theta,C_\theta,D_\theta$ are continuous functions only depend on $\theta$. And
\begin{equation}
 f_{\theta}(x,u) =  \frac{L_{\theta}}{2}x^2+S_{\theta}xu + \frac{R_{\theta}}{2}u^2 + M_{\theta}x+ N_{\theta}u, \ x,u\in \mathbb{R},\label{LQf}  
\end{equation}
where $L_{\theta},S_{\theta},R_{\theta}, M_{\theta}, N_{\theta}$ are continuous functions only depends on $\theta$ and $L_\theta, R_\theta\gg0$. We assume that the control set is unconstrained, namely, $U=\mathbb{R}.$\\
Fix an initial state $|x|<M \ (M>0)$. Following the derivation in the previous section, we extend the state dynamics to the following relations
\begin{equation}
\begin{cases}
    dX^{\pi}_{\theta}(t) = \int_\mathbb{R}(A_\theta X^{\pi}_{\theta}(t)+B_\theta u){\pi}_t (u)du\ dt\\ \ \ \ \ \ \ \ \ \ \ \ \  +\sqrt{\int_\mathbb{R}(C_\theta X^{\pi}_{\theta}(t)+D_\theta u)^2{\pi}_t(u)\ du }\ dW(t), \quad t\in[0,\infty); \\
    X^{\pi}_{\theta}(0) = x.\quad x\in [-M,M]\ (M>0)\label{LQexpsde}
\end{cases}
\end{equation} \par
Next we specify the associated set of the admissible controls $\mathcal{A}(x,\theta)$ : for for each $Q \in\mathcal{Q} $ and $\theta \sim Q$, ${\pi}_t \in \mathcal{A}(x,\theta)$,if\par
(i) for each $t\geq0,\pi_t \in \mathcal{P}(\mathbb{R})$ a.s;\par
(ii)for each $A\in \mathcal{B}(\mathbb{R})$ ,$ \{\int_A \pi^{\theta}_t(u) du,t\geq0\}$ is $\mathcal{F}_t$-progressively measurable;\par
(iii)for each $t\geq0,\mathbb{E}[\int_0^t(|E^{\pi_\theta}[u]|^2+|Var
^{\pi_\theta}[u]|^2)ds]<\infty$, where $E^{\pi_\theta}[u]=\int_R u{\pi}_t(u)du,\ \  Var
^{\pi_\theta}[u]= \int_R u^2{\pi}_t(u)du- \left(E^{\pi_\theta}[u]\right)^2$;\par
(iv)For any $\theta\in \Theta $, with $\{X^{\pi}_{\theta}(t), t\geq 0\}$ solving \eqref{LQexpsde}, $\liminf_{T\rightarrow \infty}e^{-\rho T}\mathbb{E}[(X_{\theta}^{\pi}(T))^2]=0$;\par
(v)For any $\theta\in \Theta$, with $\{X^{\pi}_{\theta}(t), t\geq 0\}$ solving \eqref{LQexpsde}, $|y_\theta^\pi(0)|<M ,\ (M>0) $.\par
Similar to the classical case, we give the definitions of the value function. The exploratory value function of LQ problems under model uncertainty is given by
\begin{equation}
\begin{aligned}
 &V(X_\theta^\pi(t)) =\sup_{Q\in\mathcal{Q}}\inf_{\pi\in \mathcal{A}(x,\theta)}\mathbb{E}\left[\int_{\Theta}\int_0^{t} e^{-\rho t}(\int_\mathbb{R} f_\theta(X_\theta^\pi(t),u){\pi}_t(u) du - \alpha\mathcal{H}({\pi}_t) )\ dt \ Q(d\theta) \right].
\end{aligned}
\end{equation}
In the following sections, we derive explicit solutions for two cases: when the unknown parameter follows a two-point distribution and when it follows a uniform distribution.

\subsection{The case of two-point distribution coefficients}
In this section we assume that the Brownian motion is one dimension for simplicity.  $\Theta = \{1,2\}$ is a discrete space and
$$\mathcal{Q} = \{\mathbf{Q}^{\lambda}:\lambda\in [0,1]\},$$
where $\mathcal{Q}$ denotes a family of distributions for $\theta$ such that $\mathcal{Q}^{\lambda}(\{1\})= \lambda$ and $\mathcal{Q}^{\lambda}(\{2\})= 1-\lambda$.\par
Considering the case of the worst market condition, the cost function reduces to
\begin{equation}
\begin{aligned}
J(u) &= \sup_{Q^{\lambda
}\in\mathcal{Q}}\int_{\Theta}y_{\theta}^{\pi}(0) \ Q^{\lambda
}(d\theta)  = \max(y_1^\pi(0),y_2^\pi(0)).
\end{aligned}
\end{equation}
Let $\pi^*$ be an optimal control, the cost function can be reorganized as $\max(y_1^*(0),y_2^*(0))=\lambda y_1^*(0)+(1-\lambda)y_2^*(0)$ 
where $\lambda$ only takes value on 0 or 1.

\textbf{Assumption 4.1}
    (i)The discount rate satisfies $\rho > max\{2(\lambda A_1+\frac{\lambda}{2}C_1^2+\max[\frac{D_1^2S_1^2-2R_1S_1(B_1+C_1D_1)}{R_1},0]),$ $ 2((1-\lambda) A_2+\frac{1-\lambda}{2}C_2^2)+\max[\frac{D_2^2S_2^2-2R_2S_2(B_2+C_2D_2)}{R_2},0]\}$;\par
    (ii)With $V_1 = \frac{S_1S_2(C_1D_1B_2- C_2D_2B_1)}{(B_1B_2+C_2D_2B_1+C_1C_2D_1D_2)(S_2B_1+C_1D_1S_2)}$, \\ $V_2 = \frac{S_1S_2(C_2D_2B_1- C_1D_1B_2)}{(B_1B_2+C_1D_1B_2+C_1C_2D_1D_2)(S_1B_2+C_2D_2S_1)}$, assume that $R_\theta +D_\theta^2 V_\theta =0$,  $\theta = 1,2$,\par(iii)$(B_1B_2+C_1D_1B_2+C_1C_2D_1D_2)(S_1B_2+C_2D_2S_1)\neq0$ and $(B_1B_2+C_2D_2B_1+C_1C_2D_1D_2)(S_2B_1+C_1D_1S_2)\neq 0$,\par
    (iv)$S_1^2+D_1^2L_1V_1<0$ and $S_2^2 + D_2^2L_2V_2 <0$.

The exploratory value function with environment coefficient following a two point distribution is given by
\begin{equation}
\begin{aligned}
V(x) &=  \sup_{\lambda\in\{0,1\}}\inf_{\pi\in\mathcal{A}(x,\lambda)}\left[
 \lambda y_1^{\pi}(0) + (1-\lambda) y_2^{\pi}(0)\right]\label{LQ1V(x)}= \max\{V_1(x),V_2(x)\}.
\end{aligned}
\end{equation}
For convenience, set
$$
A = \begin{bmatrix}
    A_1 & \\& A_2
\end{bmatrix},
B = \begin{bmatrix}
    B_1 \\ B_2
\end{bmatrix},
C = \begin{bmatrix}
    C_1 & \\ &C_2
\end{bmatrix},
D = \begin{bmatrix}
    D_1 \\ D_2
\end{bmatrix},
L = \begin{bmatrix}
    L_1 & \\ &L_2
\end{bmatrix},$$$$
S = \begin{bmatrix}
    S_1 \\ S_2
\end{bmatrix},
M = \begin{bmatrix}
    M_1 \\ M_2
\end{bmatrix},
v'(x) =\begin{bmatrix}
    \frac{\partial v(x)}{\partial x_1} \\ \frac{\partial v(x)}{\partial x_2}
\end{bmatrix},
v''(x) =\begin{bmatrix}
    \frac{\partial^2 v(x)}{\partial x_1^2}  & \frac{\partial^2 v(x)}{\partial x_1\partial x_2}\\ \frac{\partial^2 v(x)}{\partial x_2^2}  & \frac{\partial^2 v(x)}{\partial x_2\partial x_1}
\end{bmatrix},$$$$
x = \begin{bmatrix}
    x_1 \\ x_2
\end{bmatrix},
b(t,x) = \begin{bmatrix}
    b_1(t,x_1) \\ b_2(t,x_2)
\end{bmatrix},
\sigma(t,x) = \begin{bmatrix}
    \sigma_1(t,x_1) \\ \sigma_2(t,x_2)
\end{bmatrix},
\Lambda = \begin{bmatrix}
    \lambda & \\ &1-\lambda
\end{bmatrix},
$$
And $\widetilde{R}=\lambda R_1+(1-\lambda)R_2$, $\widetilde{N}=\lambda N_1+(1-\lambda)N_2$,\\ $U_1 = \frac{(B_1B_2+C_1D_1B_2+C_1C_2D_1D_2)(S_1B_2+C_2D_2S_1)}{(B_1B_2+C_2D_2B_1+C_1C_2D_1D_2)(S_2B_1+C_1D_1S_2)}$, \\$U_2 = \frac{(B_1B_2+C_2D_2B_1+C_1C_2D_1D_2)(S_2B_1+C_1D_1S_2)}{(B_1B_2+C_1D_1B_2+C_1C_2D_1D_2)(S_1B_2+C_2D_2S_1)}$.  

The integration reaches maximum at $\lambda=1$ or $\lambda=0$. Therefore $x_1$ and $x_2$ can not simultaneously exist \eqref{LQ1V(x)}, from which we deduce that $\frac{\partial^2 v }{\partial x_2\partial x_1} = \frac{\partial^2 v }{\partial x_1\partial x_2}=0$. With standard arguments in \cite{yong1999stochastic}, We deduce that the exploratory value function is a solution of the following exploratory HJB equation
\begin{equation}
\begin{aligned}
    &\rho V(x)= \max_{\lambda\in\{0,1\}} \inf_{\pi\in\mathcal{A}(x,\lambda)}\int_\mathbb{R}\frac12 x^{\top}\Lambda L x+S^{\top}\Lambda xu+\frac12\widetilde{R}u^2+M^\top\Lambda x+\widetilde{N}u+ \\ &(V'(x))^\top\Lambda A x+ B^\top\Lambda V'(x)u+\frac12(x^\top C\Lambda C V''(x)x+2D^\top C\Lambda V''(x)xu+D\Lambda Du^2)\\& +\alpha \ln\pi^\lambda(u)]\pi^\lambda(u)\ du.\label{LQ1hjb}
\end{aligned}
\end{equation}

For any $x\in \mathbb{R}^n$, the right-hand side of\eqref{LQ1hjb} can be solved to obtain a candidate optimal feedback control as
\begin{small}
\begin{equation}
\begin{aligned}
    \pi^*(u;x) &= 
     \mathcal{N}(u|-\frac{S^{\top}\Lambda x+\widetilde{N}+ B^{\top}\Lambda V'(x)+ D^{\top}C\Lambda V''(x)x}{\widetilde{R}+D^{\top}V''(x)\Lambda D},\frac{\alpha}{\widetilde{R}+D^{\top}V''(x)\Lambda D} )\\ &=\mathcal{N}(\mu^\pi,(\sigma^\pi)^2)\label{LQ1pi}
\end{aligned}    
\end{equation}
\end{small}
Therefore, the optimal feedback control distribution appears to be Gaussian with mean $\mu^{\pi}$ and variance $(\sigma^{\pi})^2$.\par Substituting the candidate optimal feedback control policy \eqref{LQ1pi} back into \eqref{LQ1hjb}, then the HJB equation is reduced to
\begin{equation}
\begin{aligned}
    &\rho V(x) = \max_{\lambda}\{\frac12x^{\top}\Lambda Lx + M^{\top}\Lambda x + V'(x)^{\top}A\Lambda x + \frac12x^{\top}C\Lambda CV''(x) x \\ &-\frac{\alpha}{2} \ln\left(\frac{e^2\pi  \alpha }{\widetilde{R} + D^{\top}\Lambda V''(x)D}\right) - \frac{\left(S^{\top}\Lambda x + \widetilde{N} + B^{\top}\Lambda V'(x)+ D^{\top}C\Lambda V''(x)x\right)^2}{2(\widetilde{R}+D^{\top}\Lambda V''(x)D)}  \}  .\label{LQ1hjbsub}
\end{aligned}    
\end{equation}
 Under Assumption 4.1, one smooth solution to the HJB equation\eqref{LQ1hjbsub} is given by
$V(x)= \frac12x^\top K_2 x+K_1^\top x+K_0,$where
\begin{scriptsize}
\begin{equation}
\begin{aligned}
    &K_2=\begin{bmatrix}
        \frac{\lambda^{1/2}\left(G_1-\sqrt{G_1^2-4F_1H_1}\right)}{2F_1} & \\ & \frac{(1-\lambda)^{1/2}\left(G_2-\sqrt{G_2^2-4F_2H_2}\right)}{2F_2}
    \end{bmatrix}=\begin{bmatrix}
        k_{21} & \\ & k_{22}
    \end{bmatrix},\\&
    K_1 = \begin{bmatrix}
        \frac{\lambda P_1 }{O_1}  ,\ \  \frac{(1-\lambda) P_2 }{O_2}
        \end{bmatrix}^\top =[k_{11},\ k_{12}],\\ & 
    K_0= \frac{-1}{2\left(\widetilde{R}+D^\top \Lambda V''(x)D\right)\rho}\left[\widetilde{N}^2  + 2\widetilde{N}B^\top \Lambda K_1+  (B^\top \Lambda K_1)^2\right]- \frac{\alpha}{2}\ln(\frac{e^2\pi\alpha}{\widetilde{R}+D^\top \Lambda V''(x)D}).\label{LQ1Ks}
\end{aligned}    
\end{equation}
\end{scriptsize}
In above equations, 
$F_1 =[2\lambda D_1^2(\lambda A_1-\frac{\rho}{2})-\lambda^2 B_1^2 -2\lambda^2 C_1D_1B_1 +2(1-\lambda)D_2^2(\lambda A_1+\frac{\lambda}{2}C_1^2-\frac{\rho}{2})U_2]$; \\ $ G_1 = \lambda\left[\lambda D_1^2 L_1  - 2 \lambda S_1 B_1 - 2\lambda C_1 D_1 S_1 + 4 R_1(\lambda A_1 +\frac{\lambda}{2}C^2_1-\frac{\rho}{2})\right]$; \\ $H_1 = -\lambda^2 (S_1^2 + D_1^2 L_1 V_1)$; $O_1= 2(\widetilde{R}+D^\top \Lambda V''(x)D)(\rho -\lambda A_1) + 2\lambda^2 B_1 (S_1 + k_{21} + C_1 D_1 k_{21})$;  $P_1 = 2(\widetilde{R}+D^\top \Lambda K_2 D)M_1 -2 S_1\widetilde{N}-2N_1[\lambda (B_1+C_1D-1)k_{21} + (1-\lambda)(B_2+C_2D_2) k_{22}]$. The expression of $F_2$, $G_2$, $H_2$, $O_2$, $P_2$ have symmetric structures with their counterparts, thus we omit the explicit definitions.\par
Now We derive explicit solution of \eqref{LQ1hjbsub}. Note this value function still depends on environment coefficient $\lambda$. Given the specific forms of state dynamics\eqref{LQx} and cost function\eqref{LQf}, we can determine the value of $\lambda$ by taking maximum between $V_1(x)$ and $V_2(x)$.\par
We demonstrate the system of algebraic equations to derive \eqref{LQ1hjbsub} in Appendix.\par
We now provide the proof that the value function satisfies the conditions for the verification theorem to hold. \par
\begin{theorem}
\label{LQ1thm}
    Suppose Assumption 4.1 holds. Then for any $x_\theta \in\mathbb{R},\ \theta=1,2$, the exploratory LQ problem with two-point distribution coefficient has a value function $V(x)$ given by
    $$V(x)= \frac12x^\top K_2 x+K_1^\top x+K_0,$$
    satisfies the HJB equation\eqref{LQ1hjb}, where $K_2,\ K_1$ and $K_0$ are defined by \eqref{LQ1Ks} respectively.\par
    Moreover, the optimal feedback control is Guassian, with its density function given by
\begin{small}
\begin{equation}
\begin{aligned}
    \pi^*(u;x) &= 
    \mathcal{N}(u|-\frac{(S^\top\Lambda +B^\top\Lambda K_2 + D^\top C\Lambda K_2)x+\widetilde{N}+B^\top\Lambda K_1}{\widetilde{R}+D^{\top}K_2\Lambda D}  ,\frac{\alpha}{\widetilde{R}+D^{\top}K_2\Lambda D})\\ &=\mathcal{N}(\mu^*,(\sigma^*)^2)\label{LQ1pistar}
\end{aligned}    
\end{equation}
\end{small}
Finally, the associated optimal state process $\{X_t^{\pi^*},\ t\geq0\}$ under $\pi^*$ is the unique solution of the SDE
\begin{equation}
\begin{cases}
    dX^{\pi}_{\theta}(t) = (A_\theta X^{\pi}_{\theta}(t)+B_\theta \mu_t^*)\ dt\\ \ \ \ \ \ \ \ \ \ \ \ \ +\sqrt{(C_\theta X_\theta^\pi(t)+D_\theta\mu_t^*)^2+D_\theta^2 (\sigma_t^*)^2 }\ dW(t), \quad t\in[0,\infty); \\
    X^{\pi}_{\theta}(0) = x\in \mathbb{R}. x\in [-M,M]\ (M>0)\label{LQexpsde2}
\end{cases}
\end{equation}
\end{theorem}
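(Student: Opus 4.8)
The plan is to run a verification argument that, thanks to Theorem 1, decouples the robust problem into two single-model problems. First I would record that $V(x)=\tfrac12 x^\top K_2 x + K_1^\top x + K_0$ is $C^2$ (indeed smooth) and that, by \eqref{LQ1pi}, the inner infimum over $\pi$ of the HJB integrand is attained pointwise by the Gaussian (Gibbs) minimizer with the displayed mean and variance; this minimizer is well defined precisely because $\widetilde R + D^\top\Lambda V''(x)D>0$ under Assumption 4.1. Substituting this minimizer reduces the HJB equation to \eqref{LQ1hjbsub}, and matching the quadratic, linear, and constant coefficients in $x$ yields the algebraic system (deferred to the Appendix) solved by $K_2,K_1,K_0$; Assumption 4.1(ii)--(iv) ensures the discriminants $G_\theta^2-4F_\theta H_\theta$ are nonnegative, the factors $U_\theta$ are well defined, and the denominators $O_\theta$ do not vanish, so the stated roots exist and $V$ is a genuine $C^2$ solution of \eqref{LQ1hjb}.

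The key structural reduction is to invoke Theorem 1 to interchange the infimum and supremum, so that
\begin{equation}
V(x)=\sup_{\lambda\in\{0,1\}}\inf_{\pi}\big[\lambda\, y_1^\pi(0)+(1-\lambda)\,y_2^\pi(0)\big]=\max\{v_1(x),v_2(x)\},
\end{equation}
where $v_\theta(x)=\inf_\pi y_\theta^\pi(0)$ is the single-model value defined in \eqref{vtheta}. Because the control may be chosen separately in each scenario, it then suffices to verify that each $v_\theta$ coincides with the quadratic built from \eqref{LQ1Ks} at the corresponding value $\lambda\in\{0,1\}$ and solves the single-model HJB equation \eqref{hjbtheta}, where $V$ restricts to an honest smooth quadratic and no difficulty arises from the kink of the maximum. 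Fixing $\theta$, I would apply It\^o's formula to $e^{-\rho t}v_\theta(X_\theta^\pi(t))$, integrate over $[0,T]$, and take expectations; the HJB inequality $\rho v_\theta\le\widetilde L_\theta+v_\theta'\widetilde b_\theta+\tfrac12 v_\theta''\widetilde\sigma_\theta^2$, with equality along the candidate $\pi_\theta^*$, gives
\begin{equation}
v_\theta(x)\le\mathbb{E}\big[e^{-\rho T}v_\theta(X_\theta^\pi(T))\big]+\mathbb{E}\int_0^T e^{-\rho t}\big(\widetilde f_\theta-\alpha\mathcal{H}(\pi_t)\big)\,dt,
\end{equation}
with equality for $\pi_\theta^*$, provided the stochastic integral has zero expectation.

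The main obstacle is the passage to the infinite horizon. I must show that the local martingale term is a true martingale, for which the moment bound $\mathbb{E}\int_0^\infty e^{-2\rho t}|v_\theta'(X_\theta^\pi(t))\widetilde\sigma_\theta|^2\,dt<\infty$ is needed, and that the transversality term vanishes, i.e. $\liminf_{T\to\infty}e^{-\rho T}\mathbb{E}[(X_\theta^\pi(T))^2]=0$. Both are controlled by the quadratic growth of $v_\theta$ together with an exponential-in-time estimate on $\mathbb{E}[(X_\theta^\pi(t))^2]$; the decisive input is the discount-rate lower bound in Assumption 4.1(i), which is exactly calibrated to dominate the growth rate of the second moment of the state under the relaxed dynamics \eqref{LQexpsde}. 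Granting this, letting $T\to\infty$ yields $v_\theta(x)\le y_\theta^\pi(0)$ for every admissible $\pi$, with equality at $\pi_\theta^*$, so $v_\theta(x)=\inf_\pi y_\theta^\pi(0)$.

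Finally I would check that $\pi_\theta^*$ in \eqref{LQ1pistar} is admissible: it is Gaussian with the finite first two moments demanded by (iii), its feedback form is progressively measurable as in (ii), condition (iv) is precisely the transversality just established, and (v) follows from evaluating the explicit quadratic. Taking the maximum over $\theta=1,2$ then recovers $V(x)=\max\{v_1(x),v_2(x)\}$ and identifies the worst-case weight $\lambda$, so $V$ is the value function and the scenario-wise $\pi_\theta^*$ is optimal. The optimal state equation \eqref{LQexpsde2} is obtained by inserting the mean $\mu_t^*$ and variance $(\sigma_t^*)^2$ of $\pi^*$ into the relaxed drift and diffusion of \eqref{LQexpsde}, using $\widetilde\sigma_\theta^2=(C_\theta X+D_\theta\mu^*)^2+D_\theta^2(\sigma^*)^2$.
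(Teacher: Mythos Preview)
Your proposal is correct and follows essentially the same verification route as the paper: apply It\^o's formula to $e^{-\rho t}$ times the candidate quadratic, use the HJB inequality (with equality along the Gaussian minimizer), control the second moment of the state to kill the transversality term via Assumption~4.1(i), and then check admissibility of $\pi^*$. The only presentational differences are that the paper localizes the stochastic integral with stopping times $\tau_n^\pi$ and then passes to the limit by dominated convergence (rather than your direct $L^2$ integrability check), and that the paper runs the It\^o computation on $V$ with $\lambda\in\{0,1\}$ fixed rather than explicitly splitting into the single-model functions $v_1,v_2$; since $\lambda\in\{0,1\}$ forces $V$ to coincide with one $v_\theta$, the two formulations are equivalent.
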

\begin{proof}
    We fix the initial state $x\in \mathbb{R}$. Let $\pi \in \mathcal{P}(x)$ and $X_{\theta}^{\pi},\theta=1,2$ be the associated state process solving \eqref{LQexpsde2} with $\pi$ being used. Let $T>0$ be arbitrary. Define the stopping times $\tau_n^{\pi}:= \{t\geq 0: \int_0^t (e^{-\rho t} V'(X_{\theta,t}^{\pi})\tilde{\sigma}(X_{\theta,t}^{\pi},\pi_t))^2 dt \geq n\}$, for $n\geq 1$.Then, Ito's lemma yields
\begin{equation}
\begin{aligned}
    &e^{-\rho (T \land \tau_n^{\pi})}V(X_{T \land \tau_n^{\pi}}^{\pi}) =V(x) + \int_0^{T\land \tau_n^{\pi}}
    e^{-\rho t}(-\rho V(X_{t}^{\pi}) + V'(X_{T \land \tau_n^{\pi}}^{\pi})^{\top}\Lambda\tilde{b}(X_{T \land \tau_n^{\pi}}^{\pi},\pi_t) + \\ &\frac12\tilde{\sigma}(X_{T \land \tau_n^{\pi}}^{\pi},\pi_t)^{\top}V''(X_{T \land \tau_n^{\pi}}^{\pi})\Lambda \tilde{\sigma}(X_{T \land \tau_n^{\pi}}^{\pi},\pi_t)dt + \int_0^{T\land \tau_n^{\pi}} e^{-\rho t}
    V'(X_{T \land \tau_n^{\pi}}^{\pi})^{\top}\Lambda \tilde{\sigma}(X_{T \land \tau_n^{\pi}}^{\pi},\pi_t)dW_t
\end{aligned}    
\end{equation}
Taking expectations, using that $V(x)$ solves the HJB equation\eqref{LQ1hjb} and that $\pi$ is in general suboptimal yield
\begin{equation}
\begin{aligned}
    &\mathbb{E} [e^{-\rho (T \land \tau_n^{\pi})}V(X_{T \land \tau_n^{\pi}}^{\pi})]  =V(x) + \mathbb{E}[\int_0^{T\land \tau_n^{\pi}}
    e^{-\rho t}(-\rho V(X_{T \land \tau_n^{\pi}}^{\pi}) + V'(X_{T \land \tau_n^{\pi}}^{\pi})^{\top}\Lambda\tilde{b}(X_{T \land \tau_n^{\pi}}^{\pi},\pi_t) \\ & + \frac12\tilde{\sigma}(X_{T \land \tau_n^{\pi}}^{\pi},\pi_t)^{\top}V''(X_{T \land \tau_n^{\pi}}^{\pi})\Lambda \tilde{\sigma}(X_{T \land \tau_n^{\pi}}^{\pi},\pi_t)dt] \geq \\&V(x) -
    \mathbb{E}[\int_0^{T\land \tau_n^{\pi}} e^{-\rho t}(\lambda \widetilde{f}_1(t,X_t^{\pi},\pi) + (1-\lambda)\widetilde{f}_2(t,X_t^{\pi},\pi)  +   \alpha\int_U\pi(u)\ln\pi(u)du) dt].
\end{aligned}    
\end{equation}
According to Burkholder-Davis-Gundy inequality, for a semi-martingale we always have
\begin{equation}
\begin{aligned}
& \mathbb{E}[\sup_{0\leq t\leq T}|X_{\theta,t}^{\pi}|^2] \leq \mathbb{E}[|X_{\theta,T}^{\pi}|^2] \\ &= x^2 + \mathbb{E}\left[\int_0^T(B_{\theta}E^\pi[u]+  2C_{\theta}D_{\theta}E^\pi[u])|X_{\theta,t}^{\pi}|+D_{\theta}^2(E^\pi[u]^2 + Var^\pi[u]^2) dt\right] \\ &+ \int_0^T(2A_{\theta}+  C_{\theta}^2)\mathbb{E}|X_{\theta,T}^{\pi}|^2 dt.
\end{aligned}    
\end{equation}
The last step can be deduced from Ito lemma. On the other hand, applying Gronwall's inequality, we can certainly have
$$
\mathbb{E}[\sup_{0\leq t\leq T}|X_{\theta,t}^{\pi}|^2]\leq (x_{\theta}^2+f_1^\theta)e^{(2A_{\theta}+C_{\theta}^2)T}
$$
where $f_1^\theta $ is a function constant independent of n. Considering $\Theta$ is compact and $f_1^\theta$ is continuous of $\theta$, we can deduce that $f_1^\theta$ is a bounded function. By analogous arguments, we can also have $\mathbb{E}[|X_{\theta,t}^{\pi}|]\leq (|x|+\int_0^TB_{\theta}udt) e^{A_{\theta}T}$. Sending $n\rightarrow\infty$, with the fact that $|f(x)| I_{\{0,T\land \tau_n^{\pi}\}}\leq |f(x)| I_{\{0,T\}}$ we deduce that
\begin{equation}
\begin{aligned}
    &\mathbb{E} [e^{-\rho T }V(X_{T }^{\pi})] \geq V(x) -\\ &
    \mathbb{E}\left[\int_0^{T} e^{-\rho t}(\lambda \widetilde{f}_1(X_{\theta,t}^{\pi},\pi_t) + (1-\lambda)\widetilde{f}_2(X_{\theta,t}^{\pi},\pi_t)  + \alpha\int_U\pi(u)\ln\pi(u)du) dt\right].
\end{aligned}    
\end{equation}
where we have used the dominated convergence theorem. Next, we recall the admissible condition \\ $\lim \inf_{T\rightarrow\infty}\mathbb{E}[e^{-\rho t}|X_{\theta,t}^{\pi}|^2]=0$ and $\rho > 2A_{\theta}+C_{\theta}^2$. This condition and the fact that $a>0$ lead to $\lim\inf_{T\rightarrow\infty}\mathbb{E}[e^{-\rho T}v(X_{T}^{\pi})]=0.$ According to the dominated convergence theorem we can deduce that
\begin{equation}
\begin{aligned}
    &V(x)\leq \mathbb{E}\left[\int_0^{\infty} e^{-\rho t}(\lambda \widetilde{f}_1(X_{\theta,t}^{\pi},\pi_t) +  (1-\lambda)\widetilde{f}_2(X_{\theta,t}^{\pi},\pi_t)  + \alpha\int_U\pi_t(u)\ln\pi_t(u)du) dt\right].
\end{aligned}    
\end{equation}
for each $x\in \mathbb{R}$ and $\pi \in \mathcal{A}(x)$. Hence, $v(x)\geq V(x)$, for all $x\in \mathbb{R}.$\par
On the other hand, let $\pi^*=\{\pi_t^*,t\geq0\}$ be the open-loop control distribution generated from the above feedback law along with the corresponding state process $\{X_{\theta,t}^*,t\geq 0\}$ with $X_{\theta,0}^*=x_{\theta}$, and assume that $\pi^* \in \mathcal{A}(x,\theta)$ for now. Then we have
\begin{equation}
\begin{aligned}
    &\mathbb{E} [e^{-\rho T }V(X_{T }^{*})] = V(x) -
    \mathbb{E}[\int_0^{T} e^{-\rho t}(\lambda \widetilde{f}_1(X_{\theta,t}^{*},\pi_t^*) +  (1-\lambda)\widetilde{f}_2(X_{\theta,t}^{*},\pi_t^*)  \\ &+ \alpha\int_U\pi_t^*(u)\ln\pi_t^*(u)du) dt].
\end{aligned}    
\end{equation}
With the fact that $\lim\sup_{T\rightarrow \infty}\mathbb{E}[e^{-\rho T}V(X_{\theta,T}^*)]\geq \lim\inf_{T\rightarrow \infty}\mathbb{E}[e^{-\rho T}V(X_{\theta,T}^*)]=0 $, and applying the dominated convergence theorem \eqref{w} yields
\begin{equation}
\begin{aligned}
     V(x) \geq
    \mathbb{E}[\int_0^{\infty} e^{-\rho t}(\lambda \widetilde{f}_1(X_{\theta,t}^{*},\pi_t^*) + (1-\lambda)\widetilde{f}_2(X_{\theta,t}^{*},\pi_t^*)  + \alpha\int_U\pi_t^*(u)\ln\pi_t^*(u)du) dt].\label{w}
\end{aligned}    
\end{equation}
for each $x \in \mathbb{R}$. This proves that $V$ is indeed the value function.\par 
It remains to show that $\pi^*\in \mathcal{A}(x)$. The proof of $\liminf_{T\rightarrow\infty}e^{-\rho T }\mathbb{E}[(X_\theta^*(T))^2]=0$ is very similar to that of Theorem 4 in \cite{wang2020reinforcement}, and is thus omitted.\par
Now, we establish the admissibility constraint
$$|y_\theta^*(0)|\leq M, (M>0).$$
Given the form of $f(x,u)$, we obtain that
\begin{equation}
    \begin{aligned}
        &|y^*_\theta(0)| \leq \mathbb{E}\left[\int_0^\infty e^{-\rho t}\left| \widetilde{f}_\theta(X^\pi_\theta(t),\pi_t(u))- \alpha\mathcal{H}(\pi_t) \right|dt\right]\leq \\ &\mathbb{E}[\int_0^\infty e^{-\rho t}| \int_U \left(\frac{L_\theta}{2}(X_\theta^*(t))^2+S_\theta X_\theta^*(t)u +\frac{R_\theta}{2}u^2 +M_\theta X_\theta^*(t)+N_\theta u\right)\pi_t^*(u)\ du\\ &- \frac{\alpha}{2}\ln(\frac{2\pi e \alpha}{R_\theta+D_\theta^2k_2}) |dt].
    \end{aligned}
\end{equation} 
Recall that
\begin{small}
    \begin{equation}
\begin{aligned}
    &\pi_t^*(u;x) = \\ &
    \mathcal{N}(u|-\frac{(S^\top\Lambda +B^\top\Lambda K_2 + D^\top C\Lambda K_2)X_\theta^*(t)+\widetilde{N}+B^\top\Lambda K_1}{\widetilde{R}+D^{\top}K_2\Lambda D}  ,\frac{\alpha}{\widetilde{R}+D^{\top}K_2\Lambda D}),
\end{aligned}    
\end{equation}
\end{small}
for any $t\geq 0$.
The moment exponential stability of $\mathbb{E}|X^*_\theta(t)|^2$ yields that $$\mathbb{E}\left[\int_0^\infty e^{-\rho t}(X_\theta^*(t)^2)\ dt\right]<\mathbb{E}\left[\int_0^\infty \frac12 x^2e^{-(\rho+\beta) t} \ dt\right]=\frac{1}{2(\rho+\beta)}.$$ Then we can derive that $|y_\theta^*(0)|\leq C(x, \alpha, \rho,\beta, \theta )$. 
\end{proof}
\subsection{The case of continuous distribution coefficient}
We now consider the case with the continuous distribution market coefficient. Suppose that $$\theta \sim U(0,a) \quad a\in [a_1,a_2].$$
where $a_1,a_2$ are two constants in $\mathbb{R}$ and $a_1<a_2$.

In this case, the cost function is given by
\begin{equation}
\begin{aligned}
    &V(x) =\sup_{a}\inf_{\pi\in \mathcal{A}^{(x)}} \int_0^a\frac1a E [ \int_0^{\infty} e^{-\rho t}(\widetilde{f}_{\theta}(X_{\theta}^{\pi}(t),{\pi}_t(u)) \\ &+\alpha \int_U {\pi}_t(u)\ln{\pi}_t(u) du ) dt| x_{\theta}^{\pi}(0) ] d\theta.\label{LQ2vf}
\end{aligned}
\end{equation}
where
\begin{equation}
\begin{aligned}
&\widetilde{f}_{\theta}(X^{\pi}_{\theta}(t),\pi_t(u)) = \int_{\mathbb{R}}\frac12[L_{\theta}x_{\theta}^2(s) +2S_{\theta}x_{\theta}(s)u +R_{\theta}u^2 + 2M_{\theta}x_{\theta}(s) + 2N_{\theta}u]\pi_t(u) du.
\end{aligned}
\end{equation}
In parallel, we also introduce $v_\theta(x)$ as \eqref{vtheta} defined.
We will be working with the following assumption.

\textbf{Assumption 4.2}for each $a$, $\theta\sim U(0,a)$, we have $  \rho > sup_{\theta}\{2A_{\theta}+C^2_{\theta}+\max[\frac{D_{\theta}^2S_{\theta}^2-2R_{\theta}S_{\theta}(B_{\theta}+C_{\theta}D_{\theta})}{R_{\theta}},0]\}$\label{assumption2} \label{LQ2assumption}

Proceeding with standard arguments, we deduce that V satisfies the Hamilton-Jacobi-Bellman(HJB) equation
\begin{equation}
\begin{aligned}
    \rho v_{\theta}(x_{\theta})&= \inf_{\pi\in \mathcal{P}(U)}\int_R [\frac12 L_{\theta}x_{\theta}^2+S_{\theta}x_{\theta}u +\frac12R_{\theta}u^2+ M_{\theta}x_{\theta}+N_{\theta}u+v'_{\theta}(x_{\theta})(A_{\theta}x_{\theta}+B_{\theta}u)\\ &+\frac12 v''_{\theta}(x_{\theta})(C_{\theta}x_{\theta}+D_{\theta}u)^2 +\alpha \ln \pi_{\theta}(u)]\pi_{\theta}(u)du \label{hjb2}
\end{aligned}    
\end{equation}
Following an analogous argument as for \eqref{LQ1pi}, we deduce that a candidate optimal feedback control is given by

\begin{equation}
\begin{aligned}
    \pi^*_{\theta}(u;x) = \mathcal{N}&(u|-\frac{S_{\theta} x_{\theta}+N_{\theta}+ B_{\theta}v'_{\theta}(x_{\theta})+C_{\theta}D_{\theta}x_{\theta}v''_{\theta}(x_{\theta})}{R_{\theta}+D_{\theta}^2v_{\theta}''(x_{\theta})},\frac{\alpha}{R_{\theta}+D_{\theta}^2v_{\theta}''(x_{\theta})} )
\end{aligned}    
\end{equation}
Letting $\mu_{\theta}^*(x_{\theta})$ and $(\sigma^*_{\theta}(x_{\theta}))^2$ denote the mean and variance of the optimal distribution $\pi^*_{\theta}(\cdot;x)$ as previously defined, the Hamilton–Jacobi–Bellman (HJB) equation \eqref{hjb2} can be equivalently reorganized as follow

\begin{equation}
    \begin{aligned}
        \rho v_{\theta}(x_{\theta}) &= -\frac{(S_{\theta}x_{\theta}+N_{\theta}+B_{\theta}v_{\theta}'(x_{\theta})+C_{\theta}D_{\theta}x_{\theta}v_{\theta}''(x_{\theta}))^2}{2(R_{\theta}+D_{\theta}v_{\theta}''(x_{\theta}))}+\frac12L_{\theta}x^2_{\theta}+M_{\theta}x_{\theta}\\&+v'_{\theta}(x_{\theta})A_{\theta}x_{\theta}+\frac12v_{\theta}''(x_{\theta})C_{\theta}^2x_{\theta}^2-\frac{\alpha}{2}(\ln(2\pi e\sigma^2_{\theta})-1).\label{reorg hjb}
    \end{aligned}
\end{equation} 
Under \textbf{Assumption \ref{LQ2assumption}} and the additional condition\\$R_{\theta}L_{\theta}>S^2_\theta$,a smooth solution to the HJB equation\eqref{reorg hjb} is given by
$$v_{\theta}(x_{\theta})=\frac12k_2^\theta x_{\theta}^2 +k^{\theta}_1x_{\theta}+k^{\theta}_0,$$
where
\begin{equation}
    \begin{aligned}
        &k^{\theta}_2=\frac{-\widetilde{b_{\theta}}-\sqrt{\widetilde{b_{\theta}}^2-4\widetilde{a_{\theta}}\widetilde{c_{\theta}}}}{2\widetilde{a_{\theta}}}
        \\ &k^{\theta}_1=\frac{(R_{\theta}+D_{\theta}^2k^\theta_2)M_{\theta}-N_{\theta}(S_{\theta}+B_{\theta}k_2^\theta+C_{\theta}D_{\theta}k_2^\theta)}{(R_{\theta}+D_{\theta}^2k_2^\theta))(\rho -A_{\theta})+B_{\theta}(S_{\theta}+B_{\theta}k_2^\theta+C_{\theta}D_{\theta}k_2^\theta)}
        \\&
        k^{\theta}_0=-\frac{N_{\theta}^2+2B_{\theta}N_{\theta}k_1^\theta+B_{\theta}^2(k_1^\theta)^2}{2\rho(R_{\theta}+D_{\theta}^2k_2^\theta)}-\frac{\alpha(\ln(2\pi e\sigma_{\theta}^2)+1)}{2\rho}
    \end{aligned}
\end{equation}
$\widetilde{a_{\theta}}=(C_{\theta}^2+2A_{\theta}-\rho)D_{\theta}^2-(B_{\theta}+C_{\theta}D_{\theta})^2$, $\widetilde{b_{\theta}}=(C_{\theta}^2+2A_{\theta}-\rho)R_{\theta}-2S_{\theta}(B_{\theta}+C_{\theta}D_{\theta})+D^2_{\theta}L_{\theta}$, $\widetilde{c_\theta}=R_{\theta}L_{\theta}-S_{\theta}^2.$
The value function takes the following form 
$$V(x) = \sup_a\int_0^a \frac1a (\frac12 k^{\theta}_2x_{\theta}^2+k^{\theta}_1x_{\theta}+k^{\theta}_0)d\theta.$$
We now provide the solvability equivalence between the classical problem and exploratory problem.With analogous proof in \cite{wang2020reinforcement} in Theorem 7, we can deduce that
\begin{equation}
    \begin{aligned}
        v_\theta(x_\theta)\leq \mathbb{E}[\int_0^{\infty} e^{-\rho t}|L(X_{\theta}^{\pi}(t),\pi_t)|dt]
    \end{aligned}
\end{equation}
and
\begin{equation}
    \begin{aligned}
        v_\theta(x_\theta)\geq \mathbb{E}[\int_0^{\infty} e^{-\rho t}|L(X_{\theta}^{\pi^*}(t),\pi^*_t)|dt].
    \end{aligned}
\end{equation}
Integrate both sides of the equation with respect to $\theta$ we have
\begin{equation}
    \begin{aligned}
        \sup_a \int_0^a \frac1a v_\theta(x_\theta) d\theta\leq \sup_a\int_0^a \frac1a\mathbb{E}[\int_0^{\infty} e^{-\rho t}|L(X_{\theta}^{\pi}(t),\pi_t)|dt]d\theta
    \end{aligned}
\end{equation}
and
\begin{equation}
    \begin{aligned}
        \sup_a\int_0^a \frac1a v_\theta(x_\theta) d\theta\geq\sup_a \int_0^a \frac1a \mathbb{E}[\int_0^{\infty} e^{-\rho t}|L(X_{\theta}^{\pi^*}(t),\pi^*_t)|dt]d\theta.
    \end{aligned}
\end{equation}
This proves that $v$ is indeed the value function, namely 
\begin{equation}
\begin{aligned}
    &V(x) \equiv \sup_{a}\inf_{\pi\in \mathcal{A}^{(x)}} \mathbb{E}\left[\int_0^a  \int_0^{\infty} \frac{e^{-\rho t}}{a} (\widetilde{f}_{\theta}(X_{\theta}^{\pi}(t),\pi_t(u)) +\alpha \int_U \pi_t(u)\ln\pi_t(u) du ) dt \ d\theta\right] .
\end{aligned}
\end{equation}
Now we establish the proof of $\pi^* \in\mathcal{A}(x,\theta)$. Under Assumption 4.1, we obtain that $v_\theta''(x)$ is a positive constant, implying that the denominators in both the mean and variance of \eqref{LQ1pistar} are strictly positive. Given the continuity of the functions $A_\theta$, $B_\theta$, etc. with respect to $\theta$, it then follows that the optimal policy $\pi^*$is continuous in $\theta$.

\section{Sovability equivalence between classical and expolrtory problems}
To illustrate the solvability equivalence between the classical LQ problem and the exploratory LQ problem under model uncertainty, this section considers a setting in which the market coefficients follow a two-point distribution.\par
Firstly we recall the classical LQ
problem under model uncertain. With $\{W_t, t\geq0\}$ be a standard Brownian motion on filtered probability space $(\Omega, \mathcal{F}, \{\mathcal{F}_t\}_{t\geq0},\mathbb{P})$ satisfying the usual conditions. The state dynamic $\{x_\theta(t)^u\}$ solves
\begin{equation}
\begin{cases}
    dx^{u}_{\theta}(t) = (A_\theta x^{u}_{\theta}+B_\theta u)dt+(C_\theta x^{u}_{\theta}+D_\theta u)dW(t), \quad t\in[0,\infty); \\
    x^{u}_{\theta}(0) = x.\label{LQ2clsde}
\end{cases}
\end{equation}
The value function is defined as follow
\begin{equation}
    \begin{aligned}
        V_\theta^{cl}(x) = \inf_{\pi\in A(x,\theta)} \sup_{Q\in \mathcal{Q}}\int_\Theta\mathbb{E}\left[\int_0^\infty e^{-\rho t}f_\theta(x_\theta(t),u)\  dt\right]Q(d\theta),\label{LQ2clvf}
    \end{aligned}
\end{equation}
where $f_\theta(t,x_\theta(t),u)$ is given by \eqref{LQf}.
Next we provide the admissible control. For each $Q\in \mathcal{Q}$ and $\theta\sim Q$, set $\mathcal{A}(x, \theta) $: $u^\theta \in \mathcal{A}(x, \theta)$ if\par
(i) $\{u_t^\theta, t\geq0\}$ is $\mathcal{F}_t$-progressively measurable;\par
(ii)For each $t\geq 0, \mathbb{E}[\int_0^t(u_s^\theta)^2 ds]\leq \infty$;\par
(iii)For any $\theta \in \Theta$, with $\{x_\theta(t)^{u}, t\geq0\}$ solving \eqref{LQ2clsde}, $\liminf_{T\rightarrow\infty}e^{-\rho T}\mathbb{E}[(x_\theta^u(T))^2]=0$;\par
(iv)For any $\theta \in \Theta$, with $\{x_\theta^{u}(t), t\geq0\}$ solving \eqref{LQ2clsde}, $|y^{cl}_\theta(0)|< M,(M>0).$\par
We now provide the theorem of solvability equivalence between two problems.
\begin{theorem}
The following two statements (a) and (b) are equivalent.\par
\textbf{(a)} The function $V(x) = \sup_a\int_0^a \frac1a (\frac12 k^{\theta}_2x_{\theta}^2+k^{\theta}_1x_{\theta}+k^{\theta}_0)d\theta $, $ x\in \mathbb{R}$, with $k_1^\theta,k_0^\theta\in \mathbb{R}$ and $k^\theta_2>0$, is the value function of the exploratory problem \eqref{LQ2clvf} and the corresponding optimal feedback control is \label{eqthm}
\begin{equation}
\begin{aligned}
    \pi^*_{\theta}(u;x) = \mathcal{N}&(u|-\frac{ x_{\theta}(S_\theta+B_\theta k_2^\theta+C_{\theta}D_{\theta}k_2^\theta)+N_{\theta}+ B_{\theta}k_1^\theta}{R_{\theta}+D_{\theta}^2k_2^\theta},\frac{\alpha}{R_{\theta}+D_{\theta}^2k_2^\theta)} ).
\end{aligned}    
\end{equation}
\par
\textbf{(b)} The function $V(x)^{cl}= \sup_a\int_0^a \frac1a (\frac12 k^{\theta}_2x_{\theta}^2+k^{\theta}_1x_{\theta}+k^{\theta}_0+\frac{\alpha(\ln(2\pi e\sigma_{\theta}^2)+1)}{2\rho})d\theta $, $ x\in \mathbb{R}$, with $k_1^\theta,k_0^\theta\in \mathbb{R}$ and $k^\theta_2>0$, is the value function of the exploratory problem \eqref{LQ2vf} and the corresponding optimal feedback control is
\begin{equation}
\begin{aligned}
    u_\theta^*= -\frac{ x_{\theta}(S_\theta+B_\theta k_2^\theta+C_{\theta}D_{\theta}k_2^\theta)+N_{\theta}+ B_{\theta}k_1^\theta}{R_{\theta}+D_{\theta}^2k_2^\theta}.
\end{aligned}    
\end{equation}
\end{theorem}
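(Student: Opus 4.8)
The plan is to establish the equivalence by reducing both the exploratory and the classical problems, for each fixed $\theta$, to the \emph{same} Riccati-type algebraic system for the quadratic and linear coefficients $k_2^\theta$ and $k_1^\theta$, and to show that the two problems differ only through an explicit, deterministic constant that encodes the differential entropy of the Gaussian optimizer. Because the model uncertainty enters solely through the outer operation $\sup_a\int_0^a\frac1a(\cdot)\,d\theta$ acting on the pointwise-in-$\theta$ value functions $v_\theta$, it suffices to prove the pointwise equivalence for each $\theta$ and then integrate; the forms posited in (a) and (b) are by construction identical except for the summand $\frac{\alpha(\ln(2\pi e\sigma_\theta^2)+1)}{2\rho}$ inside the integrand.

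First I would prove (a) $\Rightarrow$ (b). Assuming $v_\theta(x_\theta)=\frac12 k_2^\theta x_\theta^2+k_1^\theta x_\theta+k_0^\theta$ solves the exploratory HJB \eqref{reorg hjb} with the Gaussian optimizer, I note that the inner minimization in \eqref{hjb2} splits into a minimization of the expected quadratic over the mean $\mu$ and an independent entropy maximization over the variance $\sigma^2$. Completing the square in $\mu$ returns $\mu_\theta^*(x)=-\frac{x_\theta(S_\theta+B_\theta k_2^\theta+C_\theta D_\theta k_2^\theta)+N_\theta+B_\theta k_1^\theta}{R_\theta+D_\theta^2 k_2^\theta}$, which coincides with the deterministic minimizer $u_\theta^*$ of the classical Hamiltonian, while the entropy term fixes $\sigma_\theta^2=\alpha/(R_\theta+D_\theta^2 k_2^\theta)$. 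Matching the coefficients of $x_\theta^2$ and $x_\theta$ in \eqref{reorg hjb} then yields a quadratic (Riccati) equation for $k_2^\theta$ and a linear equation for $k_1^\theta$ that are free of $\alpha$, so the very same $k_2^\theta,k_1^\theta$ solve the classical HJB. Since $\sigma_\theta^2$ is slaved to $k_2^\theta$, the only $\alpha$-dependent term in \eqref{reorg hjb} is the constant $-\frac{\alpha}{2}(\ln(2\pi e\sigma_\theta^2)-1)$; hence adding $\frac{\alpha(\ln(2\pi e\sigma_\theta^2)+1)}{2\rho}$ to $k_0^\theta$ recovers precisely the classical constant term. Feeding $u_\theta^*$ into the classical problem and invoking the verification argument of Theorem \ref{LQ1thm} (adapted to the uniform-distribution case as in Section 4.2) — together with the admissibility conditions, which hold because $\rho>2A_\theta+C_\theta^2$ under Assumption 4.2 forces $\liminf_{T\to\infty}e^{-\rho T}\mathbb{E}[(x_\theta^u(T))^2]=0$ and $|y_\theta^{cl}(0)|<M$ — identifies $V^{cl}$ as the classical value function with optimal control $u_\theta^*$.

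The converse (b) $\Rightarrow$ (a) is symmetric: starting from the classical Riccati data $k_2^\theta,k_1^\theta$ in (b), the same coefficients solve \eqref{reorg hjb}, and minimizing the integrand against the entropy penalty $\alpha\int\pi\ln\pi\,du$ reconstructs the Gaussian $\pi_\theta^*$ with mean $u_\theta^*$ and variance $\alpha/(R_\theta+D_\theta^2 k_2^\theta)$, while subtracting the entropy constant from the classical $k_0^\theta$ returns the exploratory $V(x)$. I expect the main obstacle to be not any single algebraic step but the passage from these pointwise identities to the stated value functions through $\sup_a\int_0^a\frac1a(\cdot)\,d\theta$: one must argue that the $\theta$-components stay decoupled so the minimization may be carried out pointwise before the outer supremum over $a$ (which the minimax interchange of Section 3 legitimizes), and that the resulting $\theta$-integrals are well defined. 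This rests on the continuity of $\pi_\theta^*$ in $\theta$ established at the end of Section 4.2 — a consequence of the continuity of $A_\theta,B_\theta,\dots$ and the strict positivity of $R_\theta+D_\theta^2 k_2^\theta$ — which guarantees measurability and permits interchanging the pointwise optimization with the integral and the supremum.
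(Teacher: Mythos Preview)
Your proposal is correct and follows essentially the same route the paper indicates: the paper does not spell out a proof but states that it is ``very similar to that of Theorem 7 in \cite{wang2020reinforcement},'' which is precisely the pointwise-in-$\theta$ Riccati identification you carry out, followed by integration against $\frac{1}{a}\,d\theta$ and the outer supremum. Your additional remarks on decoupling the $\theta$-components via the Section~3 minimax interchange and on continuity in $\theta$ are appropriate adaptations to the model-uncertainty layer and are consistent with how the paper handles that passage in Section~4.2.
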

 The proof of solvability equivalence between classical and exploratory problem is very similar to that of Theorem 7 in \cite{wang2020reinforcement} and is thus omitted.\par
To conclude this section, we examine the cost incurred by exploration, which is Originally formulated and derived by \cite{wang2020reinforcement} in the context of an infinite-horizon setting. 
\begin{equation}
\begin{aligned}
   & C^{u^*,\pi^*}(x):= V(x)+\mathbb{E}\left[\int_{a_1}^{a2}\frac{\alpha}{a}\int_0^\infty\int_{\mathbb{R}}\pi^*_{t,\theta}(u)\ln\pi^*_{t,\theta}(u) \ du\ d\theta\right]-V^{cl}(x).
\end{aligned}
\end{equation}
where $a$ is a constant maximizing cost function \eqref{Jpi}. 
\begin{theorem}
    Suppose that \textbf{Theorem \ref{eqthm}} holds. Then, the exploration cost for the uncertain model LQ problem is 
$$C^{u^*,\pi^*}(x)=-\frac{\alpha}{2\rho},\ \  for\ \ x\in \mathbb{R}.$$
\end{theorem}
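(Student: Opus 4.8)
The plan is to collapse the exploration cost onto a single explicitly computable scalar by exploiting the fact that, in Theorem \ref{eqthm}, the exploratory value function $V$ and the classical value function $V^{cl}$ differ \emph{only} through the additive entropy constant sitting inside the coefficient $k_0^\theta$. First I would record from Theorem \ref{eqthm} that both value functions are attained at a common maximizer $a = a^*$ and share identical quadratic and linear coefficients $k_2^\theta$ and $k_1^\theta$; subtracting the two representations integrand-by-integrand makes all the $x$-dependence and the $k_2^\theta,k_1^\theta$ terms drop out, leaving
$$V(x) - V^{cl}(x) = -\int_0^{a^*}\frac{1}{a^*}\,\frac{\alpha\left(\ln(2\pi e\sigma_\theta^2)+1\right)}{2\rho}\,d\theta.$$

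Next I would evaluate the accumulated entropy term. Since the optimal policy $\pi^*_{t,\theta}$ is the Gaussian $\mathcal{N}(\mu_\theta^*,\sigma_\theta^2)$ with variance $\sigma_\theta^2 = \tfrac{\alpha}{R_\theta+D_\theta^2 k_2^\theta}$ independent of $t$, its differential entropy is $\mathcal{H}(\pi^*_{t,\theta}) = \tfrac12\ln(2\pi e\sigma_\theta^2)$, so that $\int_{\mathbb{R}}\pi^*_{t,\theta}(u)\ln\pi^*_{t,\theta}(u)\,du = -\tfrac12\ln(2\pi e\sigma_\theta^2)$ for every $t\ge 0$. Multiplying by the discount $e^{-\rho t}$ carried by the running cost and integrating over $[0,\infty)$ uses only $\int_0^\infty e^{-\rho t}\,dt = \tfrac1\rho$; with the exploration-reward sign convention (the entropy enters the cost through $-\alpha\int\pi\ln\pi$) the middle term of $C^{u^*,\pi^*}$ therefore contributes exactly $\tfrac{\alpha}{2\rho}\ln(2\pi e\sigma_\theta^2)$ per parameter value $\theta$.

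Combining the two pieces and integrating against the uniform density $\tfrac{1}{a^*}$ on $(0,a^*)$, the $\ln(2\pi e\sigma_\theta^2)$ contributions cancel identically, and only the constant coming from the ``$+1$'' inside the bracket survives:
$$C^{u^*,\pi^*}(x) = -\frac{\alpha}{2\rho}\int_0^{a^*}\frac{1}{a^*}\,d\theta = -\frac{\alpha}{2\rho},$$
because $\int_0^{a^*}\tfrac{1}{a^*}\,d\theta = 1$. This is independent of $x$, as asserted.

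The main obstacle I anticipate is justifying that the two suprema over $a$ are attained at a \emph{common} $a^*$, so that $V - V^{cl}$ may legitimately be written as the integral of the difference of integrands rather than a difference of two separately optimized integrals; this is precisely what the solvability equivalence of Theorem \ref{eqthm} delivers, together with the observation that the entropy constant enters additively and $x$-uniformly. A secondary technical point is finiteness of the discounted entropy integral and the interchange of $\int_0^\infty$, $\int_{\mathbb{R}}$ and $d\theta$: this follows from Assumption 4.2 and the continuity of $A_\theta,B_\theta,C_\theta,D_\theta,R_\theta$ in $\theta$ over a compact range, which keeps $\sigma_\theta^2$ bounded away from $0$ and $\infty$ uniformly in $\theta$, so that $\ln(2\pi e\sigma_\theta^2)$ is bounded and Fubini applies.
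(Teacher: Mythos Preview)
Your proposal is essentially the same as the paper's proof: compute the Gaussian entropy $\int_{\mathbb{R}}\pi^*_{t,\theta}\ln\pi^*_{t,\theta}\,du = -\tfrac12\ln(2\pi e\sigma_\theta^2)$ explicitly and combine it with the expressions for $V$ and $V^{cl}$ from Theorem~\ref{eqthm}, whereupon the logarithmic pieces cancel and only the constant $-\alpha/(2\rho)$ survives. The paper's argument is terser---it records the entropy formula and then says the result ``follows immediately'' from the two value-function expressions---so your added remarks on the common maximizer $a^*$ and on Fubini go beyond what the paper itself justifies.
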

\begin{proof}
  Let $\{\pi_{t,\theta}^*(u)\}$ be the open-loop control generated by the feedback control $\pi^*$ given in statements (a) with respect to the initial state $x$, namely,
\begin{equation}
\begin{aligned}
    \pi^*_{\theta}(u;x) = \mathcal{N}&(u|-\frac{ x_{\theta}(S_\theta+B_\theta k_2^\theta+C_{\theta}D_{\theta}k_2^\theta)+N_{\theta}+ B_{\theta}k_1^\theta}{R_{\theta}+D_{\theta}^2k_2^\theta},\frac{\alpha}{R_{\theta}+D_{\theta}^2k_2^\theta)} ).
\end{aligned}    
\end{equation}
where $\{X_\theta^*(t),t\geq0\}$ is the associated state process of the exploratory problem, starting from the state $x$, when $\pi^*$ is applied. It follows from straightforward calculation that
$$\int_{\mathbb{R}}\pi^*_{t,\theta}(u)\ln\pi^*_{t,\theta}(u) \ du=\frac12 \ln(\frac{2\pi e\alpha}{R_\theta+D_\theta^2k_2^\theta} ).$$
The desired result now follows immediately from the expression of $V(x)$ in (a) and $V^{cl}(x)$ in (b).  
\end{proof}
It is interesting to note that the exploratory cost is independent of the market coefficient $\theta$ (or equivalent, $Q$). This means that the exploratory cost will not increase when external environment becomes worse.
\section{Conclusion}
In this paper, we propose a continuous-time entropy-regularized reinforcement learning framework under model uncertainty. This is a fully data-driven approach that first identifies the optimal policy distribution for any given model parameter, and then determines the worst-case model parameter distribution under a robust cost criterion.

Our main contribution lies in demonstrating the feasibility of solving such problems using reinforcement learning by leveraging Sion’s minimax theorem to exchange the order of optimization over the policy and maximization over the model parameter distribution. Furthermore, we specify the sufficient conditions under which Sion’s minimax theorem can be applied, including requirements on the state dynamics, the cost function, and the admissible set of policy distributions.

Finally, we study two linear-quadratic (LQ) problems where the model parameters follow either a Bernoulli or a uniform distribution, and derive the corresponding optimal policy distributions in each case.

\appendix
\section{The system of algebra equation.}
The HJB equation has multiple solutions. Applying a generic quadratic function ansatz $v(x) = \lambda^{\gamma}v(x_1) + (1-\lambda)^{\gamma}v(x_2) = \lambda^{\gamma}(\frac12 k_{22}x_1^2+k_{11}x_1+c)+(1-\lambda)^{\gamma}(\frac12 k_{22}x_2^2+k_{12}x_2+c)$ in \eqref{LQ1hjbsub} yields the system of algebraic equations
\begin{scriptsize}
\begin{equation}
\begin{aligned}
    \lambda(1-\lambda)[S_1S_2+(S_1B_2+C_2D_2S_1)k_{22} + (B_1B_2+ C_2D_2B_1+C_1C_2D_1D_2)k_{21}k_{22}]=0 
\end{aligned}    
\end{equation}
\begin{equation}
\begin{aligned}
    \lambda(1-\lambda)[S_1S_2+(S_2B_1+C_1D_1S_2)k_{21} + (B_1B_2+ C_1D_1B_2+C_1C_2D_1D_2)k_{21}k_{22}]=0 
\end{aligned}    
\end{equation}
\begin{equation}
\begin{aligned}
    &\lambda [2D_1^2(\lambda A_1 + \frac{\lambda}{2}C_1-\frac{\rho}{2})-\lambda B_1^2 - 2\lambda C_1D_1B_1 - \lambda C_1^2D_1^2]k_{21}^2 + 2(1-\lambda)D_2^2(\lambda A_1  + \frac{\lambda}{2}C_1-\frac{\rho}{2})k_{21}k_{22} +\\ & (\lambda^2 D_1^2L_1+2\widetilde{R}(\lambda A_1 +  \frac{\lambda}{2}C_1-\frac{\rho}{2}) - 2\lambda^2 S_1B_1 - 2\lambda^2 C_1D_1S_1)k_{21}+  \lambda(1-\lambda)D_2^2L_1k_{22} + \lambda\widetilde{R}L_1 - \lambda^2 S_1^2 = 0\label{LQ1q1}
\end{aligned}    
\end{equation}
\begin{equation}
\begin{aligned}
    &(1-\lambda)[2D_2^2((1-\lambda) A_2 + \frac{1-\lambda}{2}C_2-\frac{\rho}{2})- (1-\lambda) B_2^2 - 2(1-\lambda)C_2D_2B_2 - (1-\lambda) C_2^2D_2^2]k_{22}^2  +\\ & 2\lambda D_1^2((1-\lambda) A_2 +  \frac{1-\lambda}{2}C_2-\frac{\rho}{2})k_{21}k_{22} + [(1-\lambda)^2 D_2^2L_2+  2\widetilde{R}((1-\lambda) A_2 + \frac{1-\lambda}{2}C_2-\frac{\rho}{2}) - \\ &2(1-\lambda)^2 S_2B_2 - 2(1-\lambda)^2 C_2D_2S_2)k_{22}+\lambda(1-\lambda)D_1^2L_2k_{21} + (1-\lambda)\widetilde{R}L_2 -  (1-\lambda)^2 S_2^2 = 0\label{LQ1q2}
\end{aligned}    
\end{equation}
\begin{equation}
\begin{aligned}
    &[2(\widetilde{R} +  \lambda D_1^2k_{21} + (1-\lambda)D_2^2 k_{22} )(\rho -\lambda A_1+2\lambda^2B_1S_1+ 2\lambda^2 B_1^2k_{21}+2\lambda^2C_1D_1B_1k_{21}]k_{11} \\ & +
    2\lambda(1-\lambda)[B_2S_1+B_2B_1k_{22}+C_2D_2B_1k_{22}]k_{12}
    =
    \lambda\{2(\widetilde{R} +  \lambda D_1^2k_{21} + (1-\lambda)D_2^2 k_{22} )M_1 \\ &- 2S_1[\lambda N_1+ (1-\lambda)N_2] - 2N_1[\lambda B_1k_{21} + (1-\lambda)B_2k_{22}+\lambda C_1D_1k_{21}+(1-\lambda)C_2D_2k_{22}]\}
\end{aligned}    
\end{equation}
\begin{equation}
\begin{aligned}
    &2\lambda(1-\lambda)[B_1S_2+B_2B_1k_{21}+C_1D_1B_2k_{21}]k_{11} +
    [2(\widetilde{R} + \lambda D_1^2k_{21} + (1-\lambda)D_2^2 k_{22} )(\rho -(1-\lambda) A_2)+\\ &2(1-\lambda)^2B_2S_2+2(1-\lambda)^2 B_2^2k_{22}+2(1-\lambda)^2C_2D_2B_2k_{22}]k_{12} 
    =
    (1-\lambda)\{2(\widetilde{R} +  \lambda D_1^2k_{21} + (1-\lambda)D_2^2 k_{22} )M_2 \\ &- 2S_2[\lambda N_1+ (1-\lambda)N_2 ] - 2N_2[\lambda B_1k_{21} + (1-\lambda)B_2k_{22} +\lambda C_1D_1k_{21}+(1-\lambda)C_2D_2k_{22}]\}
\end{aligned}    
\end{equation}
\begin{equation}
\begin{aligned}    &2(\widetilde{R}+D_1^2k_{21}+(1-\lambda)D_2^2k_{22})\rho c:= -[(\lambda N_1 + (1-\lambda)N_2)^2  + 2(\lambda B_1k_{11}+(1-\lambda)B_2k_{12})(\lambda N_1 + (1-\lambda)N_2)+\\ & (\lambda B_1k_{11} + (1-\lambda)B_2k_{12})^2- \frac{\alpha}{2}\ln(\frac{e^2\pi\alpha}{\widetilde{R}+\frac{\lambda}{2}D_1^2k_{21}+\frac{1-\lambda}{2}D_2^2k_{22}})]
\end{aligned}    
\end{equation}
\end{scriptsize}
Cause the quadratic equations \eqref{LQ1q1}, \eqref{LQ1q2} have two roots respectively, this system has four sets of solutions, leading to four quadratic solutions to the HJB equations. The one given though \eqref{LQ1Ks} is the one of the four solutions.

\section{Stability of Stochastic Differential Equations}
In this appendix, we state some well-known results about the stability of SDEs for readers' convenience. First, consider the following forward SDEs on $[0,\infty),$
\begin{equation}
\begin{aligned}
    x(t) = x_0 + \int_0^t b(s, x(s)) ds+\int_0^t \sigma(s, x(s))dW(s),\label{sde appendix b}
\end{aligned}    
\end{equation}
where $b:[0,\infty)\times\Omega\times\mathbb{R}\rightarrow \mathbb{R}$ and $\sigma:[0,\infty)\times\Omega\times\mathbb{R}\rightarrow \mathbb{R}$ satisfy the conditions such that \eqref{sde appendix b} has a unique strong solution.\par
\begin{lemma}
\label{appendixB}
    Assume that there is a functions $V(t,x)\in C^{2,1}(\mathbb{R}^d\times[t_0,\infty);\mathbb{R}_+)$, and positive constants $c_1-c_3$, such that
$$c_1|x|^p\leq V(t,x)\leq c_2|x|^p,\quad \mathcal{L}V(t,x)\leq -c_3 V(t,x)$$ 
for all $(x,t)\in \mathbb{R}^d\times [t_0,\infty)$. Then 
$$\mathbb{E}|x(t)|^p\leq \frac{c_2}{c_1}|x_0|^p e^{-c_3(t-t_0)}, \quad t\geq t_0$$
for all $x_0\in \mathbb{R}^d$. In other words, the trivial solution of equation \eqref{sde appendix b}
 is \textit{p}th moment exponentially stable. 
\end{lemma}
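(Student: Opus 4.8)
The plan is to realize this as the classical Lyapunov criterion for $p$th moment exponential stability: apply It\^o's formula to the exponentially reweighted process $e^{c_3 t}V(t,x(t))$, use the Lyapunov inequality $\mathcal{L}V\le -c_3 V$ to show the resulting drift is pointwise non-positive, and then convert the supermartingale-type estimate into exponential decay of $\mathbb{E}|x(t)|^p$ via the two-sided bound $c_1|x|^p\le V(t,x)\le c_2|x|^p$.

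First I would set $g(t,x)=e^{c_3 t}V(t,x)$ and apply It\^o's formula to $g(t,x(t))$ along the solution of \eqref{sde appendix b}. Writing $\mathcal{L}$ for the generator associated with \eqref{sde appendix b}, the time-derivative of $e^{c_3 t}$ produces exactly the extra term $c_3 V$, so that
$$d\left(e^{c_3 t}V(t,x(t))\right)=e^{c_3 t}\left(c_3 V+\mathcal{L}V\right)(t,x(t))\,dt+e^{c_3 t}\,\partial_x V(t,x(t))\,\sigma(t,x(t))\,dW(t).$$
By hypothesis $\mathcal{L}V\le -c_3 V$, hence the drift coefficient $c_3 V+\mathcal{L}V$ is non-positive everywhere.

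To pass to expectations rigorously I would localize, introducing the stopping times $\tau_n:=\inf\{t\ge t_0:|x(t)|\ge n\}$ and integrating on $[t_0,t\wedge\tau_n]$. On this stochastic interval the integrand of the stochastic term is bounded, so the corresponding It\^o integral is a genuine martingale with zero expectation, leaving
$$\mathbb{E}\!\left[e^{c_3(t\wedge\tau_n)}V(t\wedge\tau_n,x(t\wedge\tau_n))\right]-e^{c_3 t_0}V(t_0,x_0)=\mathbb{E}\!\left[\int_{t_0}^{t\wedge\tau_n}e^{c_3 s}\left(c_3 V+\mathcal{L}V\right)ds\right]\le 0.$$
Since the lower bound $c_1|x|^p\le V$ guarantees $V\ge 0$, sending $n\to\infty$ and invoking Fatou's lemma yields $\mathbb{E}\!\left[e^{c_3 t}V(t,x(t))\right]\le e^{c_3 t_0}V(t_0,x_0)$. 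Finally I would squeeze with the two-sided bound on $V$, using $c_1|x|^p\le V$ on the left and $V(t_0,x_0)\le c_2|x_0|^p$ on the right, to obtain
$$c_1 e^{c_3 t}\,\mathbb{E}|x(t)|^p\le \mathbb{E}\!\left[e^{c_3 t}V(t,x(t))\right]\le c_2 e^{c_3 t_0}|x_0|^p,$$
and dividing by $c_1 e^{c_3 t}$ gives the claimed estimate $\mathbb{E}|x(t)|^p\le \frac{c_2}{c_1}|x_0|^p e^{-c_3(t-t_0)}$. The only genuine subtlety is the localization step: one must confirm that the stopped stochastic integral is a true martingale so that its expectation vanishes, and that Fatou's lemma legitimately recovers the unstopped inequality in the limit $n\to\infty$, which is precisely where the non-negativity of $V$ built into $c_1|x|^p\le V$ is indispensable.
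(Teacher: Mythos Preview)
Your argument is correct and is precisely the standard Lyapunov proof of $p$th moment exponential stability. The paper itself does not supply a proof of this lemma but simply refers the reader to \cite{MAO2011107}, where exactly this It\^o--localization--Fatou argument is carried out.
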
 
 For the proof of \textbf{Lemma \ref{appendixB}}, we refer the reader to \cite{MAO2011107}.

\bibliographystyle{elsarticle-num} 
\bibliography{references}

\end{document}